\tikzset{every loop/.style={min distance=10 mm, in=60, out=120, looseness=10}}
\tikzset{
dot/.style = {circle, fill, inner sep=2.2,outer sep=0},
}
\definecolor{darkblue}{rgb}{0.0,0,0.7}
\newcommand{\newword}[1]{\textcolor{darkblue}{\textbf{\textit{#1}}}}
\newcommand{\bk}{\backslash}
\newcommand{\lm}{\lambda}
\newcommand{\Lm}{\Lambda}
\newcommand{\tm}{\widetilde{m}}
\newcommand{\TM}{\overline{\widetilde{m}}}
\newcommand{\KX}{\overline{X}}
\newcommand{\Sym}{\mathrm{Sym}}
\newcommand{\groth}{\overline{s}}
\newcommand{\dgroth}{\underline{s}}
\newcommand{\cc}{\mathsf{c}}
\newcommand{\SSYT}{\mathrm{SSYT}}
\newcommand{\SV}{\mathrm{SV}}
\newcommand{\trans}{\mathsf{T}}
\newcommand{\SSC}{\mathsf{SSC}}
\newcommand{\Gr}{\mathrm{Gr}}
\newcommand{\id}{\mathrm{id}}
\newcommand{\threeone}{{\bf 3} + {\bf 1}}
\newcommand{\mch}[2]{
\left.\mathchoice
  {\left(\kern-0.48em\binom{#1}{#2}\kern-0.48em\right)}
  {\big(\kern-0.30em\binom{\smash{#1}}{\smash{#2}}\kern-0.30em\big)}
  {\left(\kern-0.30em\binom{\smash{#1}}{\smash{#2}}\kern-0.30em\right)}
  {\left(\kern-0.30em\binom{\smash{#1}}{\smash{#2}}\kern-0.30em\right)}
\right.}
\DeclareMathOperator{\sgn}{sgn}
\DeclareMathOperator{\SSS}{\mathsf{SS}}
\DeclareMathOperator{\ts}{\mathsf{ts}}
\newcommand{\ebox}{\overline{e}}
\newcommand{\estar}{\overline{e}'}
\newtheorem{lemma}{Lemma}[section]
\newtheorem{corollary}[lemma]{Corollary}
\newtheorem{theorem}[lemma]{Theorem}
\newtheorem{conjecture}[lemma]{Conjecture}
\newtheorem{problem}[lemma]{Problem}
\newtheorem{proposition}[lemma]{Proposition}
\theoremstyle{remark}
\newtheorem{definition}[lemma]{Definition}
\newtheorem{remark}[lemma]{Remark}
\newenvironment{example}
  {\pushQED{\qed}\examplex}
  {\popQED\endexamplex}
\numberwithin{equation}{section}
\title{The Kromatic Symmetric Function: A $K$-theoretic analogue of $X_G$}
\author{Logan Crew}
\author{Oliver Pechenik}
\author{Sophie Spirkl}
\address{Department of Combinatorics \& Optimization, University of Waterloo, Waterloo, ON, N2L 3G1, Canada.} 
\email{{\tt \{lcrew, opecheni, sspirkl\}@uwaterloo.ca}}
\date{\today}
\keywords{chromatic symmetric function, Grothendieck polynomial, $K$-theory, deletion--contraction relation, Stanley--Stembridge conjecture}
\subjclass{05C15, 05C31, 05E05}
\begin{document}

\begin{abstract}
Schur functions are a basis of the symmetric function ring that represent Schubert cohomology classes for Grassmannians. Replacing the cohomology ring with $K$-theory yields a rich combinatorial theory of inhomogeneous deformations, where Schur functions are replaced by their $K$-analogues, the basis of \emph{symmetric Grothendieck functions}.
We introduce and initiate a theory of the \emph{Kromatic symmetric function} $\KX_G$, a $K$-theoretic analogue of the chromatic symmetric function $X_G$ of a graph $G$. The Kromatic symmetric function is a generating series for graph colorings in which vertices may receive any nonempty set of distinct colors such that neighboring color sets are disjoint. 

Our main result lifts a theorem of Gasharov (1996) to this setting, showing that when $G$ is a claw-free incomparability graph, $\KX_G$ is a positive sum of symmetric Grothendieck functions. This result suggests a topological interpretation of Gasharov's theorem. We then show that the Kromatic symmetric functions of path graphs are not positive in any of several $K$-analogues of the $e$-basis of symmetric functions, demonstrating that the Stanley--Stembridge conjecture (1993) does not have such a lift to $K$-theory and so is unlikely to be amenable to a topological perspective.
We also define a vertex-weighted extension of $\KX_G$ and show that it admits a deletion--contraction relation. Finally, we give a $K$-analogue for $\KX_G$ of the classic monomial-basis expansion of $X_G$.
\end{abstract}

\maketitle

\section{Introduction}

The \emph{chromatic symmetric function} $X_G$ of a graph $G$ was introduced by R.~Stanley \cite{stanley} as a generalization of G.D.~Birkhoff's \emph{chromatic polynomial} \cite{birkhoff}. While the chromatic polynomial enumerates proper graph colorings by the number of colors used, $X_G$ also records how many times each color is used. A recent boom of research regarding $X_G$ has focused on the \emph{Stanley--Stembridge conjecture} \cite{stanley.stembridge}, which proposes (in a reformulation by M.~Guay-Paquet \cite{guay}) that unit interval graphs have chromatic symmetric functions that expand positively in the $e$-basis of the ring $\Sym$ of symmetric functions. In the last few years, various special cases of this conjecture have been established through direct combinatorial analysis, including the cases of \emph{lollipop graphs} \cite{dahl} and many \emph{claw-free} graphs \cite{hamel2019}.
Another approach has been to consider various generalizations of the chromatic symmetric function and corresponding lifts of the Stanley--Stembridge conjecture. Examples of this latter approach include the \emph{chromatic quasisymmetric function} and \emph{Shareshian--Wachs conjecture} of \cite{shareshian.wachs} (further studied in \cite{abreu, per2022, cho2022, colm}), the \emph{chromatic nonsymmetric functions} of J.~Haglund--A.~Wilson \cite{Haglund.Wilson} (further studied in \cite{Tewari.Wilson.Zhang}), and D.~Gebhard--B.~Sagan's \cite{noncomm} chromatic symmetric function in noncommuting variables combined with notions of \emph{$(e)$-positivity} and \emph{appendable $(e)$-positivity} (further studied in \cite{centered, dahlberg2019, dahl2}). Our work provides a novel generalization of $X_G$ in the same vein.

An important appearance of the ring of symmetric functions $\Sym$ is as the cohomology of complex \emph{Grassmannians} (parameter spaces for linear subspaces of a vector space) or more precisely for the classifying space $BU$. Here, the \emph{Schubert classes} derived from a natural cell decomposition of $BU$ are represented by the \emph{Schur function} basis $s_\lambda$ of $\Sym$. A richer perspective into the topology of $BU$ is obtained by replacing cohomology with a generalized cohomology theory. In particular, there has been much focus on studying the associated combinatorics of the \emph{$K$-theory} ring (see \cite{Buch,Monical.Pechenik.Searles,Pechenik.Yong,Thomas.Yong}). In this context, many of the classical objects of symmetric function theory are seen to have interesting $K$-analogues, often resembling ``superpositions'' of classical objects. For example, classical \emph{semistandard Young tableaux} are replaced by \emph{set-valued tableaux} (allowing multiple labels per cell), while Schur functions are replaced by \emph{Grothendieck polynomials} $\groth_\lambda$ (inhomogeneous deformations of $s_\lambda$).

Our work introduces a $K$-analogue of the chromatic symmetric function $X_G$, enumerating colorings of the graph $G$ that assign a nonempty set of distinct colors to each vertex such that adjacent vertices receive disjoint sets. While our \emph{Kromatic symmetric function} $\KX_G$ is new, similar functions have been previously considered. The first such function was originally discussed by R.~Stanley \cite{stanley2} in the context of graph analogues of symmetric functions, with connections to the real-rootedness of polynomials. Recently, as part of his effort to refine Schur-positivity results and the Stanley--Stembridge conjecture, B.-H.~Hwang \cite{hwang} studied a similar quasisymmetric function for graphs endowed with a fixed map $\alpha: V(G) \rightarrow \mathbb{N}$ that dictates the size of the set of colors each vertex receives. To connect chromatic quasisymmetric functions of vertex-weighted graphs to \emph{horizontal-strip LLT polynomials}, F.~Tom \cite{privtom} has considered a variant for fixed $\alpha$ with repeated colors allowed. Our work appears to be the first to connect these ideas to the combinatorics of $K$-theoretic Schubert calculus. (However, \cite{Nenashev.Shapiro} (see also, \cite{Shapiro.Smirnov.Vaintrob}) is similar in spirit to our work, developing a $K$-theoretic analogue of the \emph{Postnikov--Shapiro algebra} \cite{Postnikov.Shapiro}, an apparently unrelated invariant of graphs).

In this paper, having introduced the \emph{Kromatic symmetric function}, we begin to develop its combinatorial theory. We show that the Kromatic symmetric function $\KX_G$ for any graph $G$ expands positively in a $K$-theoretic analogue (that we also introduce) of the \emph{monomial basis} of $\Sym$. In this expansion, the coefficients enumerate coverings of the graph by (possibly overlapping) \emph{stable sets}. We further extend the definition of $\KX_G$ to a vertex-weighted setting, where we give a deletion--contraction relation analogous to that developed by the first and last authors \cite{delcon} for the vertex-weighted version of $X_G$. 

Our main result is that the Kromatic symmetric function of a \emph{claw-free incomparability graph} expands positively in the symmetric Grothendieck basis $\groth_\lambda$ of $\Sym$, lifting to $K$-theory a celebrated result of V.~Gasharov \cite{gash} that such graphs have Schur-positive chromatic symmetric functions. While all known proofs of Gasharov's theorem are representation-theoretic or purely combinatorial, the existence of our $K$-theoretic analogue suggests that both results likely also have an interpretation in terms of the topology of Grassmannians. Precisely, for each claw-free incomparability graph $G$, there should be a subvariety of the Grassmannian whose cohomology class is represented by $X_G$ and whose $K$-theoretic structure sheaf class is represented by $\KX_G$. It would be very interesting to have an explicit construction of such subvarieties.

On the other hand, we show that the Kromatic symmetric functions $\KX_{P_n}$ of path graphs $P_n$ generally do not expand positively in either of two $K$-theoretic deformations we propose for the $e$-basis of $\Sym$. This fact suggests that the Stanley--Stembridge conjecture, if true, is not naturally interpreted in terms of the cohomology of Grassmannians and is unlikely to be amenable to such topological tools from Schubert calculus. We hope these observations can play a similar role to \cite{Dahlberg.Foley.vanWilligenburg} in limiting the range of potential avenues of attack on the Stanley--Stembridge conjecture.

{\bf This paper is organized as follows.} In Section~\ref{sec:background}, we provide an overview of the background and notation used from symmetric function theory (Section~\ref{sec:sym}), $K$-theoretic Schubert calculus (Section~\ref{sec:K}), and graph theory (Section~\ref{sec:graph}). In Section~\ref{sec:Kromatic}, we formally introduce the Kromatic symmetric function $\KX_G$ and give its basic properties, including a formula for the expansion in a new $K$-analogue of the monomial basis of $\Sym$ and a deletion--contraction relation for a vertex-weighted generalization. We also give our main theorem that the Kromatic symmetric functions of claw-free incomparability graphs expand positively in symmetric Grothendieck functions, lifting the main result of \cite{gash}. In Section~\ref{sec:StanleyStembridge}, we introduce two different $K$-theoretic analogues of the $e$-basis of $\Sym$ and show that the Kromatic symmetric function $\KX_{P_3}$ of a $3$-vertex path graph $P_3$ is not positive in either analogue, casting doubt on hopes for a Schubert calculus-based approach to the Stanley--Stembridge conjecture.

\section{Background}\label{sec:background}

Throughout this work, $\mathbb{N}$ denotes the set of (strictly) positive integers. We write $[n]$ for the set of positive integers $\{1, 2, \dots, n\}$. If $S$ is any set, $2^S$ denotes the power set of all subsets of $S$.

\subsection{Partitions and symmetric functions}\label{sec:sym}

In this section, we give a brief overview of necessary background material necessary. Further details can be found in the textbooks of Stanley \cite{stanleybook}, Manivel \cite{manivel}, and Macdonald \cite{mac}.

An \newword{integer partition} $\lambda = (\lm_1 \geq \lm_2 \geq  \dots \geq \lm_k)$ is a finite nonincreasing sequence of positive integers. We define $\ell(\lm)$ to be the length of the sequence $\lm$ (so above, $\ell(\lm) = k$). We define $r_i(\lm)$ to be the number of occurrences of $i$ as a part of $\lm$ (so, for example, $r_1(2,1,1,1) = 3$). If 
\[ 
\sum_{i=1}^{\ell(\lm)} \lm_i = n,
\]
we say that $\lm$ is a partition of $n$, and we write $\lm \vdash n$.
The \newword{Young diagram of shape} $\lm$ is a set of squares called \newword{cells}, left- and top-justified (that is, in ``English notation''), such that the $i$th row from the top contains $\lm_i$ cells. For example, the Young diagram of shape $(2,2,1)$ is $\ytableausetup{smalltableaux}
\ydiagram{2,2,1}$. Let $C(\lm)$ denote the set of cells of the Young diagram of shape $\lm$. If $\cc \in C(\lm)$ is a cell of the Young diagram of shape $\lambda$, we write $\cc^\uparrow$ for the cell immediately above $\cc$ (assuming it exists), $\cc^\rightarrow$ for the cell immediately right of $\cc$, and so on. We write $\lambda^\trans$ for the \newword{transpose} of $\lambda$, the integer partition whose Young diagram is obtained from that of $\lambda$ by exchanging rows and columns.

Let $S_\mathbb{N}$ denote the set of all permutations of the set $\mathbb{N}$ fixing all but finitely-many elements.
A \newword{symmetric function} $f \in \mathbb{C} \llbracket x_1,x_2,\dots, \rrbracket$ is a power series of bounded degree such that for each permutation $\sigma \in S_{\mathbb{N}}$, we have $f(x_1,x_2,\dots) = f(x_{\sigma(1)}, x_{\sigma(2)}, \dots)$. The set $\Sym \subset \mathbb{C}\llbracket x_1,x_2,\dots \rrbracket$ of symmetric functions forms a $\mathbb{C}$-vector space. Furthermore, if $\Lm^d$ denotes the set of symmetric functions that are homogeneous of degree $d$, then each $\Sym^d$ is a vector space, and \[
\Sym = \bigoplus_{d=0}^{\infty} \Sym^d
\]
as graded vector spaces. 

The dimension of $\Sym^d$ as a $\mathbb{C}$-vector space is equal to the number of integer partitions of $d$, and many bases of symmetric functions are conveniently indexed by integer partitions. Below we provide some commonly used bases that will be used in this paper.

\begin{definition}
The following are bases of $\Sym$:
\begin{itemize}
    \item the \newword{monomial symmetric functions} $\{m_{\lm}\}$, defined as
    \[
    m_{\lm} = \sum x_{i_1}^{\lm_1} \dots x_{i_{\ell(\lm)}}^{\lm_{\ell(\lm)}},
    \]
    where the sum ranges over all distinct monomials formed by choosing distinct positive integers $i_1, \dots, i_{\ell(\lm)}$;
    \item the \newword{augmented monomial symmetric functions} $\{ \tm_{\lm} \}$, defined as
    \[
    \tm_{\lm} = \left(\prod_{i=1}^{\infty} r_i(\lm)! \right) m_{\lm};
    \]
    \item the \newword{elementary symmetric functions} $\{e_\lambda \}$, defined by 
    \[
    e_n = \prod_{i_1 < \dots < i_n} x_{i_1} \dots x_{i_n}; \quad e_{\lm} = e_{\lm_1} \dots e_{\lm_{\ell(\lm)}}; 
    \]
    \item and the \newword{complete homogeneous symmetric functions} $\{h_\lambda \}$, defined by
    \[
    h_n = \prod_{i_1 \leq \dots \leq i_n} x_{i_1} \dots x_{i_n}; \quad h_{\lm} = h_{\lm_1} \dots h_{\lm_{\ell(\lm)}}.
    \]
\end{itemize}
\end{definition}

The space of symmetric functions is equipped with a natural inner product $\langle \cdot, \cdot \rangle$; it may be defined by
\[
\langle h_{\lm}, m_{\mu} \rangle = \delta_{\lm,\mu},
\]
where $\delta_{\bullet, \bullet}$ denotes the \emph{Kronecker delta function}. 

We will also need the basis of Schur functions.  A \newword{Young tableau} of shape $\lm$ is a function $T: C(\lm) \rightarrow \mathbb{N},$
typically visualized by writing the value $T(\cc)$ in the cell $\cc$. A Young tableau $T$ of shape $\lm$ is \newword{semistandard} if for each cell $\cc \in C(\lm)$, we have $T(\cc) \leq T(\cc^\rightarrow)$ and $T(\cc) < T(\cc^\downarrow)$ whenever the cells in question exist. We write $\SSYT(\lambda)$ for the set of all semistandard Young tableaux of shape $\lambda$.
The \newword{Schur function} $s_{\lm}$ is defined by
\[
s_{\lm} = \sum_{T\in \SSYT(\lambda)} x^T, \quad \text{where} \quad
x^T = \prod_{\cc \in C(\lm)} x_{T(\cc)}.\] As $\lambda$ ranges over integer partitions, the Schur functions are another basis of $\Sym$. The  inner product on $\Sym$ also satisfies
\[
\langle s_{\lm}, s_{\mu} \rangle = \delta_{\lm,\mu}.
\]

When $f \in \Sym$ is a symmetric function and $\{b_{\lm}\}$ is a basis of symmetric functions indexed by integer partitions $\lm$, the notation $[b_{\mu}]f$ denotes the coefficient of $b_{\mu}$ when $f$ is expanded in the $b$-basis. A symmetric function $f \in \Sym$ is said to be \newword{$b$-positive} if $[b_{\mu}]f$ is nonnegative for every integer partition $\mu$.

\subsection{$K$-theoretic Schubert calculus}\label{sec:K}
The \newword{Grassmannian} $\Gamma_k = \Gr_k(\mathbb{C}^\infty)$ is the parameter space of $k$-dimensional vector subspaces of the space of all eventually-zero sequences of complex numbers. 
The space $\Gamma_k$ can be given the structure of a projective Ind-variety and has a cell decomposition into cells $\Gamma_\lambda$ indexed by partitions with at most $k$ parts. Each $\Gamma_\lambda$ induces a cohomology class $\sigma_\lambda \in H^\star(\Gamma_k)$ and classically we have $H^\star(\Gamma_k)  \cong \Sym_k = \Sym \cap \mathbb{C}[x_1, \dots, x_k]$ with the isomorphism taking the class of the cell $\sigma_\lambda$ to the Schur polynomial $s_\lambda(x_1, \dots, x_k)$.

Each cell-closure in $\Gamma_k$ also has a structure sheaf, inducing a class in the representable \newword{$K$-theory} ring $K^0(\Gamma_k)$. These $K$-theoretic classes are represented by inhomogeneous symmetric polynomials called \emph{Grothendieck polynomials} $\groth_\lambda(x_1, \dots, x_k)$.

A \newword{set-valued tableau} of shape $\lm$ is a filling $T$ of each cell of $C(\lm)$ with a nonempty \emph{set} of positive integers. The set-valued tableau $T$ is \newword{semistandard} if for each cell $\cc \in C(\lm)$, we have $\max T(\cc) \leq \min T(\cc^\rightarrow)$ and $\max T(\cc) < \min T(\cc^\downarrow)$ whenever the cells in question exist.
In other words, $T$ is semistandard if every Young tableau formed by choosing one number from the set of each cell is semistandard. Let $\SV(\lambda)$ denote the set of all semistandard set-valued tableaux of shape $\lambda$.
The \newword{symmetric Grothendieck function} $\groth_{\lm}$ is
\[
\groth_{\lm} = \sum_{T \in \SV(\lm)} (-1)^{|T|-\ell(\lm)}x^T,
\]
where $|T| = \sum_{\cc \in C(\lm)} |T(\cc)|$ and $x^T = \prod_{\cc \in C(\lm)} \prod_{i \in T(\cc)} x_i$.
Note that $\groth_{\lm}$ contains terms of degree greater than or equal to $|\lm|$, and that the sum of all of its lowest-degree terms is equal to $s_{\lm}$. This tableau formula for $\groth_\lambda$ is due to A.~Buch \cite{Buch}. For further background on $K$-theoretic Schubert calculus and symmetric Grothendieck functions, see \cite{Monical.Pechenik.Searles,Pechenik.Yong}.

We will also need the \newword{dual symmetric Grothendieck function} $\dgroth_\lambda$ defined by 
\[
\langle \groth_{\lm}, \dgroth_{\mu} \rangle = \delta_{\lm,\mu}.
\] Dual symmetric Grothendieck functions were first introduced explicitly in \cite{Lam.Pylyavskyy} in relation to the \emph{$K$-homology} of $\Gamma_k$; however, they are also implicit in the earlier work \cite{Buch}. Each $\dgroth_\lambda$ contains terms of degree less than or equal to $|\lm|$; moreover, the sum of all of its lowest-degree terms is equal to $s_{\lm}$. Although an attractive tableau formula for $\dgroth_\lambda$ was given in \cite{Lam.Pylyavskyy}, we do not recall it here, as we will not need it.

\subsection{Graphs and coloring}\label{sec:graph}

Here, we recall basic notions, terminology, and notations from graph theory. For further details, see the textbooks \cite{Diestel,West}.

A \newword{graph} $G$ consists of a set $V$ of \newword{vertices}, and a set $E$ of unordered pairs of distinct vertices called \newword{edges}. All graphs in this paper are \emph{simple}, so there are no loops, and no multi-edges. When $\{v_1,v_2\} \in E(G)$, we will typically denote this edge by $v_1v_2$ and say $v_1$ and $v_2$ are \newword{adjacent}. Two graphs $G,G'$ are \newword{isomorphic} if there is a bijection $\phi : V(G) \to V(G')$ such that, for all vertices $v, w \in V(G)$, we have $vw \in E(G)$ if and only if $\phi(v) \phi(w) \in E(G')$. In this paper, we consider graphs up to isomorphism.

The \newword{complete graph} $K_d$ with $d$ vertices is the graph such that $V(K_d) = [d]$, and \[E(K_d) = \{vw: v, w \in [d], v \neq w\}.\] The $n$-vertex \newword{path} $P_n$ has vertex set $V(P_n) = [n]$ and edge set $E(P_n) = \{ uv : u,v \in [n], v-u = 1\}$. The \newword{claw} $K_{1,3}$ has vertex set $V(K_{1,3}) = [4]$ and edge set $E(K_{1,3}) = \{\{1,2\},\{1,3\},\{1,4\}\}$.

An \newword{induced subgraph} of a graph $G$ is a graph $H$ such that $V(H) \subseteq V(G)$ and 
\[
E(H) = \{ vw \in E(G) : v,w \in V(H)\}.
\] We say the graph $G$ is \newword{$H$-free} if no induced subgraph of $G$ is isomorphic to $H$. We will be especially interested in claw-free graphs.

A \newword{stable set} (or \newword{independent set}) of a graph $G$ is a set $S \subseteq V(G)$ of vertices such that for each $v, w \in S$, $vw \notin E(G)$. A \newword{clique} of a graph $G$ is a set $S \subseteq V(G)$ of vertices such that for each $v \neq w \in S$, $vw \in E(G)$. 

For $\alpha : V(G) \to \mathbb{N}$ a vertex weight function of the graph $G$, the \newword{$\alpha$-clan graph of $G$} is the graph $C_\alpha(G)$ obtained by blowing up each vertex $v$ into a clique of $\alpha(v)$ vertices. More formally, $C_\alpha(G)$ has vertex set 
$
V(C_\alpha(G)) = \{(v,i) : v \in V(G), i \in [\alpha(v)] \}.
$
In $C_\alpha(G)$, the vertices $(v,i)$ and $(w,j)$ are adjacent either if $vw \in E(G)$ or if both $v=w$ and $i \neq j$. 

Given a vertex $v \in V(G)$, its \newword{open neighborhood} $N(v)$ is defined by $N(v) = \{w: vw \in E(G)\}$. Given $S \subseteq V(G)$ and $v \in V(G)$ with $v \notin S$, we let $vS \subseteq E(G)$ denote the set of edges $\{vs: s \in S\}$.
The \newword{contraction} of a graph $G$ by a pair of distinct vertices $v, w \in V(G)$, denoted $G/vw$, is the graph with vertex set 
\[
V(G/vw) = \left(V(G) \bk \{v,w\}\right) \cup \{z_{vw}\},
\]
where $z_{vw}$ is a new vertex,
and edge set 
\[
E(G/vw) = \left(E(G) \bk \big( vN(v) \cup wN(w) \big)\right) \cup \big( z_{vw} N(v) \cup z_{vw} N(w) \big).
\]

A \newword{coloring} of a graph $G$ is a function $\kappa: V(G) \rightarrow \mathbb{N}$. A coloring $\kappa$ of $G$ is \newword{proper} if $\kappa(a) \neq \kappa(b)$ whenever $ab \in E(G)$. 

The \newword{chromatic symmetric function} \cite{stanley}  of a graph $G$ is the power series
\[
X_{G} = \sum_{\kappa} \prod_{v \in V(G)} x_{\kappa(v)}
\]
where the first sum ranges over all proper colorings $\kappa$ of $G$. Note that, for every graph $G$, $X_G \in \Sym$.

\subsection{Posets and their incomparability graphs}
A \newword{poset} (partially-ordered set) $(P, \leq)$ is a set $P$ together with a binary relation $\leq$ that is \newword{transitive} ($a \leq b$ and $b \leq c$ implies $a \leq c$), \newword{reflexive} ($a \leq a$), and \newword{weakly antisymmetric} ($a\leq b$ and $b \leq a$ implies $a = b$). For $a,b \in P$, we write $a < b$ if $a \leq b$ and $a \neq b$. We often write $P$ as shorthand for $(P, \leq)$ and decorate the relation as $\leq_P$ for clarity as needed. For more background on posets than is provided here, see \cite{West}. 

When $a,b \in P$ are such that $a \not \leq b$ and $b \not \leq a$, we say $a$ and $b$ are \newword{incomparable}. We write $\bf n$ for the unique totally ordered $n$-element poset and call such a poset a \newword{chain}. The \newword{sum} $P + Q$ of posets $(P, \leq_P), (Q, \leq_Q)$ is the disjoint union of sets $P \sqcup Q$ with the relation $a \leq_{P+Q} b$ if and only if either $a,b \in P$ with $a \leq_P b$ or $a,b \in Q$ with $a \leq_Q b$.

We say $(Q, \leq_Q)$ is a \newword{subposet} of $(P, \leq_P)$ if $Q$ is a subset of $P$ and, for all $a,b \in Q$, we have $a \leq_Q b$ if and only if $a \leq_P b$. Two posets $(P, \leq_P), (Q, \leq_Q)$ are \newword{isomorphic} if there is a bijection $\phi : P \to Q$ such that, for all $a,b \in P$, we have $a \leq_P b$ if and only if $\phi(a) \leq_Q \phi(b)$. If $(P, \leq_P), (Q, \leq_Q)$ are any two posets, we say that $(P, \leq_P)$ is \newword{$(Q, \leq_Q)$-free} if no subposet of $P$ is isomorphic to $Q$. We will be mostly interested in posets that are $(\threeone)$-free.

Associated to any poset $P$ is its \newword{incomparability graph} $I(P)$. This is the graph whose vertex set is $V(I(P)) = P$ and whose edge set is $E(I(P)) = \{ab: a,b \in P, a \not \leq b, b \not \leq a \}$. That is to say, edges connect incomparable elements of the poset. It is straightforward to see that the poset $P$ is $(\threeone)$-free if and only if its incomparability graph is claw-free; however, many claw-free graphs are not incomparability graphs of posets.

\section{The Kromatic Symmetric Function}\label{sec:Kromatic}
\subsection{Main definition}\label{sec:main_def}

A \newword{vertex-weighted graph} $(G,\alpha)$ consists of a graph $G$ together with a function 
$\alpha \colon V(G) \rightarrow \mathbb{N};
$
we call $\alpha$ the \newword{weight function} on the vertices of $G$.
A \newword{proper $\alpha$-coloring} of $G$ is a function 
$
\kappa : V(G) \to 2^\mathbb{N} \bk \{\emptyset \}
$
assigning to each $v \in V(G)$ a set of $\alpha(v)$ distinct colors in $\mathbb{N}$, subject to the constraint that when $uv \in E(G)$, we have $\kappa(u) \cap \kappa(v) = \emptyset$. Note that these conditions are equivalent to saying that every choice of a single element from each $\kappa(v)$ yields a proper coloring of $G$. A \newword{proper set coloring} of $G$ is a proper $\alpha$-coloring for some weight function on the vertices of $G$.

The \newword{set chromatic symmetric function} of the vertex-weighted graph $(G,\alpha)$ is
\[
X_{G}^\alpha = \sum_{\kappa} \prod_{v \in V(G)} \prod_{i \in \kappa(v)} x_i,
\]
where the first sum runs over all proper $\alpha$-colorings of $G$. Note that up to a scalar factor depending only on $\alpha$, the set chromatic symmetric function $X_{G}^\alpha$ equals the chromatic symmetric function $X_{C_\alpha(G)}$ of the $\alpha$-clan graph of $G$. 

\begin{definition}\label{def:Kromatic}
The \newword{Kromatic symmetric function} of a graph $G$ is the symmetric power series
\[
\KX_G = \sum_{\alpha} X_{G}^\alpha,
\]
where $\alpha$ ranges over all weight functions of the vertex set $V(G)$.
\end{definition}

In other words, $\KX_G$ enumerates all colorings of $G$ by nonempty sets of colors, such that adjacent vertices receive disjoint sets of colors. Note that $\KX_G$ is not a homogeneous symmetric function, but rather consists of $X_G$ plus terms of degree higher than $|V(G)|$. 

\begin{remark}
Stanley~\cite{stanley2} considered a function $Y_G$ related to $\KX_G$, although with two differences. Firstly, $Y_G$ uses the rescaled power series $X_{C_\alpha(G)}$ in place of $X_{G}^\alpha$. Secondly, $Y_G$ allows $\alpha(v) = 0$, whereas the Kromatic symmetric function $\KX_G$ only considers strictly positive vertex weights. We are unaware of any further study of the functions $Y_G$ since their introduction in \cite{stanley2}.
\end{remark}

\begin{remark}
    It is easy to observe that the Kromatic symmetric function $\KX_G$ of any graph $G$ is $m$-positive. Moreover, one may also check that $\KX_G$ is positive in the basis $\{\omega(p_\lambda)\}_\lambda$, where $p_\lambda$ denotes the \emph{power sum symmetric function} and $\omega$ is the standard involution on symmetric functions.
\end{remark}

Although weight functions are used in the definition of $\KX_G$, the function $\KX_G$ is independent of any particular one.
We will find it useful to also consider a vertex-weighted analogue of $\KX_G$. Let $\alpha$ and $\omega$ be independent vertex weight functions on $G$. Define
\[
X_{(G,\omega)}^\alpha = \sum_{\kappa} \prod_{v \in V(G)} \left(\prod_{i \in \kappa(v)} x_i\right)^{\omega(v)},
\]
where again the first sum runs over all proper $\alpha$-colorings of $G$.
Finally, we define the \newword{vertex-weighted Kromatic symmetric function} of the vertex-weighted graph $(G,\omega)$ to be
\[
\KX_{(G,\omega)} = \sum_{\alpha} X_{(G,\omega)}^\alpha,
\]
where the sum is over all weight functions $\alpha$. In this way, $\KX_{(G,\omega)}$ is a generating function for proper set colorings of $G$.

\subsection{A $K$-theoretic monomial expansion}\label{sec:Kmonomial}
For $\lambda$ an integer partition, let $K_\lambda$ denote the vertex-weighted complete graph $(K_{\ell(\lambda)}, \omega)$, where $\omega(i) = \lambda_i$ for each $i$.
It is straightforward to see that $X_{K_\lambda} = \tm_{\lm}$, the augmented monomial symmetric function. Thus, by analogy, we define
\[
\TM_{\lm} := \KX_{K_{\lm}} = \sum_{\alpha} \tm_{\lm_1^{\alpha_1}, \dots, \lm_{\ell(\lm)}^{\alpha(\ell(\lm))}},
\]
where the sum is over all vertex weight functions $\alpha$ of $K_{\ell(\lambda)}$. We call $\TM_{\lm}$ the \newword{$K$-theoretic augmented monomial symmetric function}. To justify this definition, we show that the Kromatic symmetric function of every graph (even every vertex-weighted graph) is a positive sum of $K$-theoretic augmented monomial symmetric functions. 

First, we need some additional definitions.
We define a \newword{stable set cover} $C$ of  a graph $G$ to be a collection of (distinct) stable sets of $G$ such that every vertex of $V(G)$ is in at least one element of $C$. In symbols, this means that
\[
\bigcup_{S \in C} S = V(G);
\]
note that this union is not required to be disjoint. We write $\SSC(G)$ for the family of all stable set covers of $G$.
For $C \in \SSC(G)$, if $G$ is endowed with a vertex weight function $\omega$, let $\lambda(C)$ be the partition of length $|C|$ whose parts are $\sum_{v \in S} \omega(v)$ for $S \in C$.
Finally, let the \newword{color class} of the color $i$ in a proper set coloring $\kappa$ be 
\[
\{v \in V(G) : i \in \kappa(v)\},
\]
the set of vertices of $G$ that receive color $i$ (possibly among other colors) under $\kappa$.

\begin{proposition}\label{lem:mbasis}
For any vertex-weighted graph $(G,\omega)$, we have
\[
\KX_{(G,\omega)} = \sum_{C \in \SSC(G)} \TM_{\lm(C)}.
\]
\end{proposition}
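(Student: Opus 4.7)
The plan is to prove the identity by exhibiting a weight-preserving bijection between proper set colorings of $(G,\omega)$ and pairs $(C,\kappa')$ where $C\in\SSC(G)$ and $\kappa'$ is a proper set coloring of the vertex-weighted complete graph $K_{\lambda(C)}$. Once such a bijection is in hand, summing over pairs and invoking the definition $\TM_{\lambda(C)}=\KX_{K_{\lambda(C)}}$ yields the right-hand side.

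First I would explain how a proper set coloring $\kappa$ of $(G,\omega)$ produces a stable set cover. For each color $i\in\mathbb{N}$, the color class $\kappa^{-1}(i)=\{v : i\in\kappa(v)\}$ is a stable set, since any two adjacent vertices are assigned disjoint sets of colors, and the nonempty color classes together cover $V(G)$ because every $\kappa(v)$ is nonempty. So $C(\kappa):=\{\kappa^{-1}(i) : \kappa^{-1}(i)\neq\emptyset\}$ lies in $\SSC(G)$. Identifying each $S\in C(\kappa)$ with the vertex of $K_{\lambda(C(\kappa))}$ whose weight is $\sum_{v\in S}\omega(v)$, I then set $\kappa'_{\kappa}(S)=\{i : \kappa^{-1}(i)=S\}$. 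The sets $\kappa'_{\kappa}(S)$ are pairwise disjoint and nonempty, so $\kappa'_{\kappa}$ is a proper set coloring of the complete graph $K_{\lambda(C(\kappa))}$.

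Conversely, given $(C,\kappa')$, I would define $\kappa(v)=\bigcup_{S\in C,\ v\in S}\kappa'(S)$. The covering property of $C$ forces $\kappa(v)$ to be nonempty, while properness holds because adjacent vertices $u,v$ cannot both lie in any single stable set $S\in C$, and $\kappa'$-colors attached to distinct $S$ are disjoint. A direct check shows these two constructions are mutually inverse. For the weight-matching step I would rewrite
\[
\prod_{v\in V(G)}\prod_{i\in\kappa(v)}x_i^{\omega(v)} \;=\; \prod_{i\in\mathbb{N}} x_i^{\sum_{v\in\kappa^{-1}(i)}\omega(v)},
\]
observe that whenever the exponent is nonzero, $i$ belongs to $\kappa'_{\kappa}(S)$ for a unique $S\in C(\kappa)$, and regroup to obtain $\prod_{S\in C(\kappa)}\prod_{i\in\kappa'_{\kappa}(S)}x_i^{\lambda(C(\kappa))_S}$, which is precisely the monomial contributed by $\kappa'_{\kappa}$ to $\KX_{K_{\lambda(C(\kappa))}}$.

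There is no genuinely hard step here; the only mild subtlety is that $C$ is an unordered family of distinct stable sets while the vertices of $K_{\lambda(C)}$ carry a canonical linear order inherited from the partition $\lambda(C)$. This is harmless because $\TM_{\lambda(C)}$ depends only on the partition $\lambda(C)$ and not on any labeling of $C$, so any consistent choice of identification suffices to complete the bookkeeping.
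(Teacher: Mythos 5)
Your proof is correct and follows essentially the same route as the paper's: both group the proper set colorings of $(G,\omega)$ by the stable set cover formed by their color classes and match those with cover exactly $C$ weight-preservingly against the colorings enumerated by $\TM_{\lambda(C)} = \KX_{K_{\lambda(C)}}$. You simply make the intermediate bijection with colorings of the complete graph $K_{\lambda(C)}$ (and the inverse map and weight check) more explicit than the paper does.
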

 \begin{proof}
 The monomials of $\KX_{(G,\omega)}$ correspond to proper set colorings $\kappa$ of $G$. For each such $\kappa$, note that the set of its color classes is a stable set cover $C_\kappa$ of $G$. 
 
 For each $C\in \SSC(G)$, the monomials of $\TM_{\lm(C)}$ enumerate all proper set colorings $\kappa$ of $G$ such that
 \begin{itemize}
     \item each $S \in C$ is the color class of at least one color $i$ under $\kappa$; and
     \item for each nonempty $T \subseteq V(G)$ with $T \notin C$, there is no color $j$ such that $T$ is the color class of $j$ under $\kappa$.
 \end{itemize}
 In other words, the monomials of $\TM_{\lm(C)}$ correspond to all proper set colorings $\kappa$ of $G$ such that $C_\kappa =C$.
 
Since this correspondence between the monomials of $\KX_{(G,\omega)}$ and those of $\sum_{C \in \SSC(G)} \TM_{\lm(C)}$ is a weight-preserving bijection, the two power series are equal.
 \end{proof}

 For some small graphs $G$, the expansions of $\KX_G$ in the $\TM_{\lm}$-basis and in the classical $p$-basis are collected in Table~\ref{tab:mytab}.

 \begin{table}[htb]
\scalebox{1}{
\begin{tabular}{|l|l|l|}
\hline
Graph $G$ \begin{tikzpicture}[scale=1]
  \node at (0, 0)(1){};
    \node at (0,0.2)(void) {};
  \end{tikzpicture} \!\!\!\!\!\!\!\! & $\KX_G$ in the $\TM$-basis & $\KX_G$ in the $p$-basis 
\\ \hline
  \begin{tikzpicture}[scale=1]
  \node[dot] at (0, 0)(1){};
    \node[dot] at (1, 0)(2){};
    \node[dot] at (0.5, 0.866)(3){};
    \node at (0.5,1)(void) {};
    \draw[black, thick] (3) -- (1);
    \draw[black, thick] (2) -- (1);
  \end{tikzpicture} & 
  $\TM_{(1^3)}+\TM_{(2,1)}+2\TM_{(2,1^2)}+\TM_{(2,1^3)}$ & \pbox[c]{5cm}{$p_{(1^3)}-2p_{(2,1)}+p_{(3)}-4p_{(2,1^2)} +p_{(2^2)}+7p_{(3,1)}-4p_{(4)}+ {\rm h.o.t.}$} \\ \hline
    \begin{tikzpicture}[scale=1]
  \node[dot] at (0, 0)(1){};
    \node[dot] at (1, 0)(2){};
    \node[dot] at (0.5, 0.866)(3){};
    \node at (0.5,1)(void) {};
    \draw[black, thick] (3) -- (1);
    \draw[black, thick] (2) -- (1);
    \draw[black, thick] (2) -- (3);
  \end{tikzpicture} & 
  $\TM_{(1^3)}$ & \pbox[c]{5cm}{$p_{(1^3)}-3p_{(2,1)}+2p_{(3)}-6p_{(2,1^2)}+3p_{(2^2)}+12p_{(3,1)}-9p_{(4)}+ {\rm h.o.t.}$} \\ \hline
   \begin{tikzpicture}[scale=1]
  \node[dot] at (0, 0)(1){};
    \node[dot] at (1, 0)(2){};
    \node[dot] at (1, 1)(3){};
     \node[dot] at (0, 1)(4){};
    \draw[black, thick] (3) -- (2);
    \draw[black, thick] (2) -- (1);
    \draw[black, thick] (4) -- (3);
    \draw[black, thick] (1) -- (4); 
  \end{tikzpicture} & \pbox[b]{5cm}{$\TM_{(1^4)}+2\TM_{(2,1^2)}+\TM_{(2^2)}+4\TM_{(2,1^3)}+4\TM_{(2^2,1)}+2\TM_{(2,1^4)}+6\TM_{(2^2,1^2)}+4\TM_{(2^2,1^3)}+\TM_{(2^2,1^4)}$} & \pbox[b]{5cm}{$p_{(1^4)}-4p_{(2,1^2)}+2p_{(2^2)}+4p_{(3,1)}-3p_{(4)}-8p_{(2,1^3)}+12p_{(2^2,1)}+20p_{(3,1^2)}-16p_{(3,2)}-28p_{(4,1)}+20p_{(5)}+ {\rm h.o.t.}$}\\ \hline
\begin{tikzpicture}[scale=1][t]
  \node[dot] at (0, 0)(1){};
    \node[dot] at (1, 0)(2){};
    \node[dot] at (1, 1)(3){};
     \node[dot] at (0, 1)(4){};
    \node at (1,1.1)(void){};
    \draw[black, thick] (3) -- (1);
    \draw[black, thick] (2) -- (1);
    \draw[black, thick] (4) -- (1);
    \draw[black, thick] (4) -- (3);
    \draw[black, thick] (2) -- (4); 
  \end{tikzpicture} & $\TM_{(1^4)}+\TM_{(2,1^2)}+2\TM_{(2,1^3)}+\TM_{(2,1^4)}$ & \pbox[b]{5cm}{$p_{(1^4)}-5p_{(2,1^2)}+2p_{(2^2)} +6p_{(3,1)}-4p_{(4)}-10p_{(2,1^3)}+28p_{(3,1^2)}+16p_{(2^2,1)}-22p_{(3,2)}-42p_{(4,1)}+30p_{(5)} + {\rm h.o.t.}$} \\ \hline
  \begin{tikzpicture}[scale=1]
  \node[dot] at (0, 0)(1){};
    \node[dot] at (1, 0)(2){};
    \node[dot] at (1, 1)(3){};
     \node[dot] at (0, 1)(4){};
    \draw[black, thick] (3) -- (1);
    \draw[black, thick] (2) -- (1);
    \draw[black, thick] (1) -- (4);
  \end{tikzpicture} & \pbox[b]{8.5cm}{$\TM_{(1^4)}+3\TM_{(2,1^2)}+\TM_{(3,1)}+6\TM_{(2,1^3)}+3\TM_{(2^2,1)}+3\TM_{(3,1^2)}+3\TM_{(2,1^4)}+9\TM_{(2^2,1^2)}+3\TM_{(3,1^3)}+3\TM_{(3,2,1)}+9\TM_{(2^2,1^3)}+\TM_{(2^3,1)}+9\TM_{(3,2,1^2)}+\TM_{(3,1^4)}+3\TM_{(2^2,1^4)}+3\TM_{(2^3,1^2)}+9\TM_{(3,2,1^3)}+3\TM_{(3,2^2,1)}+9\TM_{(3,2^2,1^2)}+3\TM_{(3,2,1^4)}+3\TM_{(2^3,1^3)}+9\TM_{(3,2^2,1^3)}+\TM_{(3,2^3,1)}+\TM_{(2^3,1^4)}+3\TM_{(3,2^2,1^4)}+3\TM_{(3,2^3,1^2)}+3\TM_{(3,2^3,1^3)}+\TM_{(3,2^3,1^4)}$} & \pbox[b]{5cm}{$p_{(1^4)}-3p_{(2,1^2)}+3p_{(3,1)}-p_{(4)}-6p_{(2,1^3)}+3p_{(2^2,1)}+15p_{(3,1^2)}-3p_{(3,2)} -16p_{(4,1)} +7p_{(5)} + {\rm h.o.t.}$} \\ \hline
\end{tabular}}
\caption{Kromatic symmetric functions of some small graphs as determined by implementing Proposition~\ref{lem:delcon} in Python, expressed in the $K$-theoretic $\TM$-basis, as well as in the usual $p$-basis. Since the latter expansion is infinite, we write explicitly only the terms of degree at most $|V(G)|+1$, suppressing higher order terms (``h.o.t.'').}
\label{tab:mytab}
\end{table}

\begin{remark}
It is also natural to ask if there is a $K$-theoretic deformation of the $p$-basis that lifts classical $p$-basis expansions of $X_G$ to expansions of $\KX_G$. Since $p_n$ is the chromatic symmetric function of a single vertex of weight $n$, it is natural to attempt to define a $K$-theoretic $p$-basis by letting $\overline{p}_n$ be the Kromatic symmetric function of a single vertex of weight $n$.

However, with this choice it is unclear if the usual $p$-basis expansion of $X_G$ extends to $\overline{p}$-basis expansion of $\KX_G$. Attempting to naively modify Stanley's inclusion--exclusion proof \cite[Theorem 2.5]{stanley} of this expansion for unweighted graphs fails because it uses the fact that if we take a connected graph $G$ and evaluate $\sum_{\kappa} \prod_{v \in V(G)} x_{\kappa(v)}$ over all $\kappa$ such that adjacent vertices receive the same color, this yields $p_{|V(G)|}$, since all vertices must have the same color. But in the Kromatic case, the corresponding statement is that color sets of adjacent vertices have nonempty intersection, which yields many possibilities for the corresponding sum over all such colorings. In particular, the result depends on more than just $|V(G)|$, making analysis more difficult. Indeed, numerical evidence suggests that the $p$-basis expansion of $X_G$ does not directly extend to a $\overline{p}$-basis expansion of $\KX_G$, suggesting that we require either a different $K$-theoretic $p$-basis or a modified expansion.
\end{remark}

\subsection{A deletion--contraction relation}
The Kromatic symmetric function for vertex-weighted graphs also admits a deletion--contraction relation, analogous to that of \cite{delcon} for the chromatic symmetric function, although somewhat more complicated. We first need to set up some additional notation.

Recall that, given $S \subseteq V(G)$ and $v \in V(G)$ with $v \notin S$, $vS$ denotes the set of edges $\{vs: s \in S\}  \subseteq E(G)$. Let $(G,\omega)$ be a vertex-weighed graph, and let $v, w$ be distinct vertices such that $e = vw \notin E(G)$. The graph $G^\star$ has vertex set 
\[
V(G^\star) = V(G) \cup \{ z^\star\},
\]
where $z^\star$ is a new vertex, and edge set
\[
E(G^\star) = E(G) \cup \{vw, vz^\star, wz^\star\} \cup z^\star N(v) \cup z^\star N(w).
\]
If $G$ has a vertex weight function $\omega$, we define an induced vertex weight function $\omega^\star$ on $G^\star$ by 
\[
\omega^\star(u) = 
\begin{cases}
\omega(v)+\omega(w), & \text{if $u = z^\star$;}\\
\omega(u), & \text{if $u \in V(G)$}.
\end{cases}
\]
We also define graphs $G^1, G^2$ with vertex sets 
\[
V(G^i) = V(G)
\]
and edge sets
\[
E(G^1) = E(G) \cup e \cup vN(w) \quad \text{and} \quad E(G^2) = E(G) \cup e \cup wN(v).
\]
When $G$ has a vertex weight function $\omega$, there are induced vertex weight functions $\omega^i$ on $G^i$ given by
\[
\omega^1(u) = 
\begin{cases}
    \omega(v)+\omega(w), & \text{if $u = v$;} \\
    \omega(u), & \text{otherwise};
\end{cases}
\]
and 
\[
\omega^2(u) = 
\begin{cases}
    \omega(v)+\omega(w), & \text{if $u = w$;} \\
    \omega(u), & \text{otherwise}.
\end{cases}
\]
In the contracted graph $G/e$, we give it the weight function $\omega/e$ defined by
\[
(\omega/e)(u) = 
\begin{cases}
    \omega(v)+\omega(w), & \text{if $u = z_{vw}$;} \\
    \omega(u), & \text{otherwise}.
\end{cases}
\]
Finally, let $G \cup e$ be the graph $(V(G),E(G)\cup\{e\}$).

\begin{proposition}\label{lem:delcon}
Let $(G,\omega)$ be a vertex-weighed graph, and let $v$ and $w$ be distinct vertices such that $e = vw \notin E(G)$. Then
\begin{align}\label{eq:delcon}
\KX_{(G,\omega)} = \KX_{(G / e, \omega / e)}+ \KX_{(G \cup e, \omega)}+ \KX_{(G^1, \omega^1)}+\KX_{(G^2, \omega^2)}+\KX_{(G^\star, \omega^\star)}.
\end{align}
\end{proposition}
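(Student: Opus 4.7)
The plan is to exhibit a weight-preserving bijection that partitions the proper set colorings of $G$ (with monomial weights given by $\omega$) into five classes, one per summand on the right-hand side of~\eqref{eq:delcon}. For any proper set coloring $\kappa$ of $G$, the pair $(\kappa(v), \kappa(w))$ of nonempty color sets satisfies exactly one of: (i) $\kappa(v) \cap \kappa(w) = \emptyset$; (ii) $\kappa(v) = \kappa(w)$; (iii) $\kappa(v) \subsetneq \kappa(w)$; (iv) $\kappa(w) \subsetneq \kappa(v)$; or (v) each of $\kappa(v) \setminus \kappa(w)$, $\kappa(w) \setminus \kappa(v)$, and $\kappa(v) \cap \kappa(w)$ is nonempty. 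I claim these five cases correspond, respectively, to proper set colorings of $(G \cup e, \omega)$, $(G/e, \omega/e)$, $(G^1, \omega^1)$, $(G^2, \omega^2)$, and $(G^\star, \omega^\star)$.

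Cases (i) and (ii) are essentially direct from the definitions. In (i), a proper set coloring of $(G \cup e, \omega)$ is exactly the same data as a proper set coloring of $G$ with $\kappa(v) \cap \kappa(w) = \emptyset$, and the monomials agree term-by-term. In (ii), writing $S = \kappa(v) = \kappa(w)$, I would assign $S$ to the merged vertex $z_{vw}$ of $G/e$; the condition that $S$ is disjoint from $\kappa(u)$ for all $u \in N(v) \cup N(w)$ is exactly properness in $G/e$, and the weights collapse since $(\omega/e)(z_{vw}) = \omega(v)+\omega(w)$ combines the two factors $\prod_{i \in S} x_i^{\omega(v)}$ and $\prod_{i \in S} x_i^{\omega(w)}$.

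For case (iii), I would construct $\kappa^1$ on $G^1$ by setting $\kappa^1(v) := \kappa(v)$, $\kappa^1(w) := \kappa(w) \setminus \kappa(v)$ (nonempty since the containment is strict), and leaving all other vertices unchanged. The added adjacencies $vN(w)$ in $G^1$ force $\kappa^1(v) \subseteq \kappa(w)$ to be disjoint from the color sets of neighbors of $w$, which follows from properness of $\kappa$ on $G$; the new edge $e = vw$ is respected since $\kappa^1(v)$ and $\kappa^1(w)$ are disjoint by construction. The weight matches because the lost contribution $\prod_{i \in \kappa^1(v)} x_i^{\omega(w)}$ from $w$ is absorbed by the promotion $\omega^1(v) = \omega(v) + \omega(w)$, and the inverse reconstructs $\kappa(w) = \kappa^1(v) \sqcup \kappa^1(w)$. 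Case (iv) is symmetric, swapping the roles of $v$ and $w$. For (v), I would split into three nonempty pieces by setting $\kappa^\star(v) := \kappa(v) \setminus \kappa(w)$, $\kappa^\star(w) := \kappa(w) \setminus \kappa(v)$, and $\kappa^\star(z^\star) := \kappa(v) \cap \kappa(w)$. The edges of $G^\star$, namely $vw$, $vz^\star$, $wz^\star$, together with $z^\star N(v)$ and $z^\star N(w)$, encode precisely the disjointnesses inherited from $\kappa$, while $\omega^\star(z^\star) = \omega(v)+\omega(w)$ accounts for the doubled exponent on colors shared between $v$ and $w$.

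The main obstacle is the bookkeeping in cases (iii)--(v): one must verify that the auxiliary edge sets of $G^1, G^2, G^\star$ impose exactly the right disjointness conditions to match properness in $G$ (neither more nor less), and that the weight adjustments $\omega^1, \omega^2, \omega^\star$ compensate for the redistribution of colors between vertices. The nonemptiness stipulations on color sets in the auxiliary graphs must also be matched to the strict or proper containment defining each case; for instance, $\kappa^1(w) \neq \emptyset$ corresponds precisely to $\kappa(v) \subsetneq \kappa(w)$, and the three-part split for case (v) uses all three parts being nonempty. I expect no deeper conceptual obstruction beyond this careful casework.
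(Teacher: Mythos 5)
Your proposal is correct and follows essentially the same route as the paper: the identical five-way case split on how $\kappa(v)$ and $\kappa(w)$ intersect, with the same explicit bijections onto proper set colorings of $G/e$, $G \cup e$, $G^1$, $G^2$, and $G^\star$, and the same weight bookkeeping via $\omega/e$, $\omega^1$, $\omega^2$, $\omega^\star$. Your verification of the monomial-preservation and of the nonemptiness/strict-containment correspondence is in fact slightly more detailed than what the paper records.
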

 \begin{proof}
 The proof is a direct bijection between the proper set colorings contributing to the left and right sides of Equation~\eqref{eq:delcon} as indicated below. In each case it is straightforward to verify the given correspondence is reversible, and that the monomials produced by the corresponding colorings are identical.
 \begin{itemize}
     \item Proper set colorings $\kappa$ of $(G,\omega)$ such that $\kappa(v) = \kappa(w)$ correspond to all proper set colorings $\kappa/e$ of $(G/e,\omega/e)$ by
     \[
     (\kappa /e)(u) = 
     \begin{cases}
         \kappa(v), & \text{if $u = z_{vw}$;} \\
         \kappa(u), & \text{otherwise}.
     \end{cases}
     \]
     \item Proper set colorings $\kappa$ of $(G,\omega)$ such that $\kappa(v) \cap \kappa(w) = \emptyset$ are in exact correspondence with all the proper set colorings of $(G \cup e, \omega)$.
     \item Proper set colorings $\kappa$ of $(G,\omega)$ such that $\kappa(v) \subsetneq \kappa(w)$ correspond to all proper set colorings $\kappa^1$ of $(G^1, \omega^1)$ by 
     \[
     \kappa^1(u) = 
     \begin{cases}
      \kappa(v), & \text{if $u=v$}; \\
      \kappa(w)\bk \kappa(v), & \text{if $u=w$}; \\
      \kappa(u), & \text{otherwise}.
     \end{cases}
     \]
     \item Proper set colorings $\kappa$ of $(G,\omega)$ such that $\kappa(w) \subsetneq \kappa(v)$ correspond to all proper set colorings $\kappa^2$ of $(G^2, \omega^2)$ by 
         \[
     \kappa^2(u) = 
     \begin{cases}
      \kappa(v) \bk \kappa(w), & \text{if $u=v$}; \\
      \kappa(w), & \text{if $u=w$}; \\
      \kappa(u), & \text{otherwise}.
     \end{cases}
     \]
     \item  Proper set colorings $\kappa$ of $(G,\omega)$ that fit into none of the previous categories (that is, those such that each of the sets 
     \[
     \kappa(v) \cap \kappa(w), \kappa(v) \bk \kappa(w), \kappa(w) \bk \kappa(v)
     \]
     are nonempty) correspond to all the proper set colorings $\kappa^\star$ of $(G^\star,\omega^\star)$ by
     \[
     \kappa^\star(u) = 
     \begin{cases}
      \kappa(v) \cap \kappa(w), & \text{if $u=z^\star$}; \\
    \kappa(v)\bk \kappa(w), & \text{if $u=v$}; \\
      \kappa(w)\bk \kappa(v), & \text{if $u=w$}; \\
      \kappa(u), & \text{otherwise}.
     \end{cases}
     \]
 \end{itemize}
 This completes the proof of the deletion--contraction relation.
 \end{proof}

The deletion--contraction relation of Proposition~\ref{lem:delcon} can be used to yield algorithmically the $\TM_\lambda$-expansion of a Kromatic symmetric function $\KX_{(G,\omega)}$ in an alternative fashion to Proposition~\ref{lem:mbasis}.  Define the \newword{total stability} of a graph $G$ to be $\ts(G) = |\SSS(G)|-|V(G)|$, where $\SSS(G)$ denotes the collection of all stable sets of $G$. Since any single vertex of a graph is a stable set, we may view the total stability as the number of non-trivial stable sets. Thus, note that $\ts(G) \geq 0$ and that equality holds if and only if $G$ is a complete graph.

\begin{corollary}\label{lem:recurse}
Recursively applying Proposition~\ref{lem:delcon} to a vertex-weighted graph $(G,\omega)$ (iteratively applying it to an arbitrary nonedge of each non-complete graph formed) terminates in a sum of Kromatic symmetric functions of vertex-weighted complete graphs, yielding the $\TM_{\lm}$ expansion of $\KX_{(G,\omega)}$.
\end{corollary}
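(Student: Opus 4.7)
The plan is to exhibit a non-negative integer monovariant on vertex-weighted graphs that strictly decreases under each of the five graph operations appearing on the right-hand side of Equation~\eqref{eq:delcon}. The natural candidate is the total stability $\ts(G) = |\SSS(G)| - |V(G)|$ defined just before the corollary, which satisfies $\ts(G) \geq 0$ with equality precisely for complete graphs. Since Proposition~\ref{lem:delcon} preserves $\KX$ at each step, strict decrease of $\ts$ guarantees that the recursion terminates in a sum of Kromatic symmetric functions of vertex-weighted complete graphs. Sorting the vertex weights of each such complete graph into a partition $\lambda$ identifies it with $K_\lambda$, and the definition $\TM_\lambda = \KX_{K_\lambda}$ from Section~\ref{sec:Kmonomial} immediately produces the desired $\TM$-expansion.

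The verification of the monovariant property is a straightforward case analysis for four of the five operations. For $G \cup e$, $G^1$, and $G^2$, each graph has the same vertex set as $G$ but strictly more edges (at minimum the edge $e = vw$), so $\SSS \subsetneq \SSS(G)$ --- the set $\{v,w\}$ is a stable set of $G$ but not of any of these three graphs --- and $\ts$ strictly decreases. For $G/e$, stable sets of $G/e$ biject with stable sets of $G$ that contain either both or neither of $v,w$ (via the correspondence $z_{vw} \leftrightarrow \{v,w\}$), so $|\SSS(G)| - |\SSS(G/e)| \geq 2$, the singletons $\{v\}$ and $\{w\}$ being among those lost. Since $|V(G/e)| = |V(G)|-1$, one finds $\ts(G) - \ts(G/e) \geq 1$.

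The main obstacle is the analysis of $G^\star$, which is subtle because adding the vertex $z^\star$ superficially suggests increased complexity. The key observation is that stable sets of $G^\star$ partition into those avoiding $z^\star$ (which coincide with stable sets of $G \cup e$) and those containing $z^\star$ (which, by the adjacencies of $z^\star$, biject with stable subsets of $V(G) \setminus (\{v,w\} \cup N(v) \cup N(w))$). Meanwhile, $|\SSS(G)| - |\SSS(G \cup e)|$ counts stable sets of $G$ containing both $v$ and $w$, which via $S \mapsto S \setminus \{v,w\}$ are in bijection with exactly the same collection of stable subsets of $V(G) \setminus (\{v,w\} \cup N(v) \cup N(w))$. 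Thus $|\SSS(G^\star)| = |\SSS(G)|$ exactly, and since $|V(G^\star)| = |V(G)|+1$ we conclude $\ts(G^\star) = \ts(G) - 1$, completing the monovariant analysis and thereby establishing the corollary.
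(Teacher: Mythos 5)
Your proposal is correct and takes essentially the same approach as the paper: the same total-stability monovariant $\ts(G)=|\SSS(G)|-|V(G)|$, with the same case analysis showing it strictly decreases for each of the five graphs $G/e$, $G\cup e$, $G^1$, $G^2$, $G^\star$. Your treatment of $G^\star$ (decomposing $\SSS(G^\star)$ into $\SSS(G\cup e)$ plus the sets containing $z^\star$, and matching the latter with the stable sets of $G$ containing both $v$ and $w$) is in fact a slightly sharper version of the paper's argument, establishing the exact equality $|\SSS(G^\star)|=|\SSS(G)|$ that the paper asserts.
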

 \begin{proof}
 We proceed by induction on the total stability $\ts(G)$. If $\ts(G) = 0$, then $G$ is a complete graph and the result is trivial.
 Otherwise, it is sufficient to show that after applying Proposition~\ref{lem:delcon} to $(G,\omega)$, each of the resulting five graphs
 \[
 G/e, G \cup e, G^1, G^2, G^\star
 \]
 has strictly smaller total stability than $G$ does. We consider each of these five graphs in turn.
 \begin{itemize}
     \item \uline{($G/e$)}: We have $|V(G/e)| = |V(G)|-1$. On the other hand, 
the stable sets of $G/e$ not containing $z_{vw}$ are in obvious bijection with the stable sets of $G$ containing neither $v$ nor $w$. Moreover, there is a bijection between the stable sets of $G/e$ containing $z_{vw}$ and $\{ S \in \SSS(G) : v, w\in S\}$. Together, this gives a bijection between stable sets of $G/e$ and those stable sets of $G$ which contain either both or neither of $v$ and $w$. Since $\{v\}$ and $\{w\}$ are stable sets of $G$, we have $|\SSS(G/e)| \leq |\SSS(G)| - 2$, and so 
\[
\ts(G/e) = |\SSS(G/e)| - |V(G/e)| \leq \big( |\SSS(G)|  - 2 \big) - \big( |V(G)|-1 \big) = \ts(G) -1,
\]
as needed.
    \item \uline{($G \cup e$)}: We have $|V(G \cup e)| = |V(G)|$. Clearly, $\SSS(G \cup e) \subseteq \SSS(G)$. However, this inclusion is strict since $\{v,w\} \in \SSS(G) \bk \SSS(G \cup e)$. Thus, $\ts(G \cup e) < \ts(G)$.
     \item \uline{($G^1$)}: We have $|V(G^1)| = |V(G)|$. Again, it is clear that $\SSS(G^1) \subseteq \SSS(G)$ and the inclusion is strict since $\{v,w\} \in \SSS(G) \bk \SSS(G^1)$.
     \item \uline{($G^2$)}: The analysis is the same as for $G^1$.
     \item \uline{($G^\star$)}: We have $|V(G^\star)| = |V(G)|+1$. Let $X = \{S \in \SSS(G) : v, w \in S \}$ and let $Y = \SSS(G) \bk X$. There is an obvious injection of $\{ S : \SSS(G^\star) : z^\star \notin S \}$ into $Y$, since $vw \in E(G^\star)$. We may also biject $\{ S : \SSS(G^\star) : z^\star \in S \}$ with $X$ by mapping $S \mapsto (S \bk \{z^\star\}) \cup \{v,w\}$. This latter map is well-defined since such an $S$ does not include $v, w$, or any vertex in $N(v)$ or $N(w)$. Combining these bijections yields a bijection of $\SSS(G^\star)$ with $\SSS(G)$, so $|\SSS(G^\star)| = |\SSS(G)|$. We conclude that 
     \[
     \ts(G^\star) = |\SSS(G^\star)| - |V(G^\star)| \leq |\SSS(G)| - \left( |V(G)|+1 \right) < |\SSS(G)| - |V(G)| = \ts(G),
     \]
     as needed.
 \end{itemize}
 Therefore, the corollary follows by induction on $\ts$.
 \end{proof}

\subsection{Grothendieck positivity}\label{sec:Groth_pos}

In 1996, Gasharov \cite{gash} proved that $X_G$ is Schur-positive for $G$ a claw-free incomparability graph of a poset $P$ by showing that $[s_{\lm}]X_G$ enumerates objects he called $\emph{$P$-tableaux}$. We now define a generalization of these objects that is enumerated by $[\groth_{\lm}]\KX_G$. 
Informally, a \emph{Grothendieck $P$-tableau of shape $\lm$} consists of a $P$-tableau (as defined in \cite{gash}) of shape $\mu$ for some $\mu \subseteq \lm$ layered with a semistandard Young tableau of shape $\lm/\mu$ that also satisfies ``flagging'' restrictions on each row. (Similar flagging conditions appear in \cite{lenart} with relation to Schur-basis expansions of Grothendieck symmetric functions; we do not know a direct relation between our Grothendieck $P$-tableaux and \cite{lenart}, nor with the \emph{flagged tableaux} of \cite{Wachs}.)

\begin{definition}
Let $P$ be a poset and $\lm$ an integer partition. A \newword{Grothendieck $P$-tableau of shape $\lm$} is a filling $T$ of the cells of the Young diagram of $\lm$ with elements of $P \sqcup \mathbb{N}$ such that
\begin{itemize}
    \item the cells filled with elements of $P$ form the Young diagram of some partition $\mu \subseteq \lm$ (and so the cells filled with positive integers form the Young diagram of the skew shape $\lm/\mu$);
    \item for each $p \in P$, there exists at least one cell $c$ with $T(c) = p$,
    \item for each cell $\cc$ with $T(\cc) \in P$, \begin{itemize}
        \item  we have  $T(\cc) <_P T(\cc^{\rightarrow})$, if $T(\cc^{\rightarrow}) \in P$, and
        \item we have $T(\cc) \ngtr_P T(\cc^{\downarrow})$, if $T(\cc^{\downarrow}) \in P$;
    \end{itemize} 
    \item for each cell $\cc$ with $T(\cc) \in \mathbb{N}$, 
    \begin{itemize}
        \item we have $T(\cc) \leq T(\cc^{\rightarrow})$, if $T(\cc^{\rightarrow}) \in \mathbb{N}$, 
        \item we have $T(\cc) < T(\cc^{\downarrow})$, if $T(\cc^{\downarrow}) \in \mathbb{N}$, and
        \item we have $T(\cc) \leq i-1$, if $\cc$ is in row $i$ (in particular, the first row contains no positive integers).
    \end{itemize}
\end{itemize}
\end{definition}

\begin{theorem}\label{thm:grot}
If $G$ is a claw-free incomparability graph, then the Kromatic symmetric function $\KX_G$ is
Grothendieck-positive.
Moreover, the coefficient $[\groth_{\lm}]\KX_G$ counts the number of Grothendieck $P$-tableaux of shape $\lm$.
\end{theorem}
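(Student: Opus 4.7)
The plan is to lift Gasharov's bijective proof of the classical Schur-positive expansion of $X_{I(P)}$ to the $K$-theoretic setting. Expanding $\groth_\lambda$ via Buch's set-valued tableau formula, the claimed identity $\KX_G = \sum_\lambda c_\lambda \groth_\lambda$ with $c_\lambda$ the number of Grothendieck $P$-tableaux of shape $\lambda$ becomes equivalent to
\[
\sum_\kappa x^\kappa \;=\; \sum_\lambda \sum_Q \sum_{T \in \SV(\lambda)} (-1)^{|T|-|\lambda|} x^T,
\]
where $\kappa$ ranges over proper set colorings of $G$ and $Q$ over Grothendieck $P$-tableaux of shape $\lambda$. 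Because the left-hand side is a positive sum of monomials while the right-hand side carries signs, the natural approach is to construct a weight-preserving, sign-reversing involution on the set of pairs $(Q, T)$ whose fixed points are in weight-preserving bijection with proper set colorings.

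The fixed points should be the pairs in which $T$ has minimal excess (i.e., $|T| = |\lambda|$, so $T$ is an ordinary SSYT) and in which $(Q, T)$ is in a ``canonical'' form encoding a proper set coloring $\kappa$. In this canonical form, the columns of the $\mu$-subshape of $Q$ are stable sets of $G$ (so each column yields a color class), the entries of $T$ record the corresponding color values, and the integer entries of $Q$ in the skew shape $\lambda/\mu$, together with the flagging condition ($\leq i-1$ in row $i$), account for the additional colors assigned to each vertex $p \in P$ beyond its ``primary'' color. This extends Gasharov's classical bijection between proper colorings and pairs consisting of a column-strict $P$-filling and an SSYT.

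The involution itself would mimic Gasharov's classical swap. Given a non-fixed pair $(Q, T)$, scan the cells of $\lambda$ in a fixed reading order to locate the first local violation of canonical form. Violations come in two flavors: either $T(\cc)$ contains two or more entries so that a prescribed local swap removes one entry from $T(\cc)$ (flipping the sign) while adjusting $Q$ or an adjacent $T$-cell to preserve $x^T$; or the $P$-labels of $Q$ near $\cc$ fail a local ordering required of a Grothendieck $P$-tableau, in which case the swap modifies $Q$ and correspondingly moves an entry in or out of $T$. Either way, $|T|$ changes by $\pm 1$, so the sign reverses while $x^T$ is preserved.

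The main obstacle is defining this involution precisely: it must be genuinely involutive, have exactly the canonical-form pairs as fixed points, and be compatible with all conditions defining Grothendieck $P$-tableaux. The claw-free hypothesis on $G$, equivalently the $\threeone$-freeness of $P$, enters exactly as in Gasharov's classical proof, forbidding three-element configurations that would otherwise block the $P$-swap in the $\mu$-part. The principal new subtlety, absent in the classical case, is that swaps crossing the boundary of $\mu$ inside $\lambda$ must preserve the flagging condition on the integer entries of $Q$, which couples the $P$-tableau and integer-tableau parts of $Q$ in a delicate way.
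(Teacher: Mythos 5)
Your proposal does not follow the paper's route, and more importantly it leaves the entire content of the proof unconstructed. The paper computes $[\groth_{\lm}]\KX_G = \langle \dgroth_\lambda, \KX_G\rangle$ by expanding the \emph{dual} Grothendieck function $\dgroth_\lambda$ via the Lascoux--Naruse $K$-theoretic Jacobi--Trudi determinant into signed products of complete homogeneous functions, pairing these against the monomial expansion of $\KX_G$, reinterpreting the resulting signed sum as a sum over ``Grothendieck $P$-arrays'' (rows of varying lengths indexed by a permutation $\pi$, each row a chain in $P$ followed by a flagged weakly increasing integer string), and then cancelling non-tableau arrays by a mild extension of Gasharov's sign-reversing involution, which swaps tails of adjacent rows at the leftmost, bottom-most flaw. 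This is exactly Gasharov's own strategy --- which is \emph{not} a bijective proof at the level of monomials, contrary to your framing. You instead expand $\groth_\lambda$ on the primal side via Buch's set-valued tableaux and aim for an involution on pairs $(Q,T)$ whose fixed points biject with proper set colorings. Already in lowest degree, your plan demands a direct bijection between proper colorings of $I(P)$ and pairs (column-strict $P$-tableau, SSYT of the same shape); such an RSK-type bijection for general $(\threeone)$-free posets is a well-known difficulty and is not supplied by Gasharov's argument. So you are not lifting an existing bijection, you are proposing to invent a strictly harder one.

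Beyond the strategic mismatch, there is a concrete gap: the involution is never defined, and you acknowledge as much (``the main obstacle is defining this involution precisely''). That involution \emph{is} the theorem. Moreover, your description of the fixed points is already inconsistent with the statement being proved. If the fixed points were exactly the pairs with $|T|=|\lambda|$ in some canonical form contributing positively, note that the full set of pairs with $|T|=|\lambda|$ has generating function $\sum_\lambda c_\lambda s_\lambda$, which does not equal $\KX_G$ even for $G=K_1$ (there $c_{(1,1)}=2$, since the definition permits a poset element to occupy several cells, so the degree-two part of $\sum_\lambda c_\lambda s_\lambda$ is $2e_2$ while that of $\KX_{K_1}$ is $e_2$); hence your involution would have to cancel some ordinary-SSYT pairs against genuinely set-valued ones, which your local-swap sketch does not address. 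Finally, a small but telling error: the stable sets of $I(P)$ are the chains of $P$, which appear as the \emph{rows} (not columns) of the $P$-part of a Grothendieck $P$-tableau, so the ``canonical form'' you describe does not even have the right orientation to encode color classes.
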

\begin{proof}
The basic structure of our proof is as follows. We use a generalized Jacobi--Trudi formula \cite[Equation~(4)]{Lascoux.Naruse} to write a dual symmetric Grothendieck function as a sum of products of complete homogeneous symmetric functions. Then, for any graph $G$, the inner product of this expression with $\KX_G$ yields a formula for the coefficient of $\groth_\lambda$ in the Grothendieck expansion of $\KX_G$ in terms of its monomial expansion. In the case that $G$ is a claw-free incomparability graph, we then extend Gasharov's \cite{gash} theory of \emph{$P$-arrays} to collect terms in this expansion and extend his sign-reversing involution to cancel all terms except those corresponding to Grothendieck $P$-tableaux.

Now, we give the details of this argument.
Let $G$ be a claw-free incomparability graph and let $P$ be a poset such that $G = I(P)$. The graph $G$ being claw-free is equivalent to the poset $P$ being $(\threeone)$-free.

Fix a positive integer $n \in \mathbb{N}$ and a partition $\lm \vdash n$. Let $k = \ell(\lm)$. Let $N \geq 2n$ be fixed, and let $S_N$ denote the symmetric group of permutations of $[N]$, with identity element $\id_{S_N}$. For $\pi \in S_N$, the \newword{sign} of $\pi$ is 
\[
\sgn \pi = \begin{cases}
    +1, & \text{if $\pi$ is in the \emph{alternating group} $A_N \subset S_N$;} \\
    -1, & \text{otherwise.}
\end{cases}
\]
We will write 
\[
\mch{n}{k} \coloneqq \binom{n+k-1}{k}
\]
as a shorthand for the number of $k$-element \emph{multisets} with elements of $n$ types.

We will use the notation that for $f$ a symmetric function, $f[x_N] \coloneqq f(x_1,x_2,\dots,x_N,0,0,\dots)$ is a restriction to finitely many variables, and $f[x_N+r] \coloneqq f(x_1,\dots,x_N,1,1,\dots,1,0,0,\dots)$ is a restriction to $N+r$ variables with $r$ of them set equal to $1$. Recall the dual symmetric Grothendieck function $\dgroth_\lambda$ from Section~\ref{sec:K}.
Lascoux and Naruse \cite[Equation~(4)]{Lascoux.Naruse} show that $\dgroth_{\lm}$ may be expanded as
\begin{align*}
  \dgroth_{\lm}[x_N] &= \det(s_{\lm_i-i+j}[x_N+i-1]) \\ &=  \sum_{\pi \in S_N} \sgn(\pi) \prod_i s_{\lm_i-i+\pi(i)}[x_N+i-1],
\end{align*}
a $K$-theoretic analogue of the Jacobi--Trudi formula.

We apply here the simplification from just before \cite[Equation~(4)]{Lascoux.Naruse},
\[
s_m[x_N+r] = \sum_{j=0}^m \binom{r+j-1}{j}s_{m-j}[x_N] = \sum_{j=0}^m \mch{r}{j} s_{m-j}[x_N],
\]
to simplify the above to
\begin{align}\label{eq:dual_groth}
\begin{split}
   \dgroth_{\lm}[x_N] &= \sum_{\pi \in S_N} \sgn(\pi) \prod_{i=1}^{\l(\lm)} \left(\sum_{l=0}^{\lm_i-i+\pi(i)} \binom{i+l-2}{l}s_{\mu_i-l}[x_N]\right)  \\ &=\sum_{\pi \in S_N} \sgn(\pi) \sum_{\substack{(l_1, \dots, l_{l(\lm)}) \\ l_i \leq \lm_i-i+\pi(i)}}  \left(\prod_i \mch{i-1}{l_i} h_{\lm_i-i+\pi(i)-l_i}\right) \\ &= 
   \sum_{\pi \in S_N} \sgn(\pi) \sum_{\substack{(l_1, \dots, l_{l(\lm)}) \\ l_{\pi(i)} \leq \lm_{\pi(i)}-\pi(i)+i}} \left(\prod_{\pi(i)} \mch{\pi(i)-1}{l_{\pi(i)}}h_{\lm_{\pi(i)}-\pi(i)+i-l_{\pi(i)}}\right),
   \end{split}
\end{align}
where the last equality follows by replacing $\pi$ by $\pi^{-1}$. We have also passed to general symmetric functions for convenience since equality on a sufficiently large finite number of variables implies equality on infinitely many variables. We also note that an essentially equivalent formula for $\dgroth_\lambda$ appears in work of Iwao \cite{iwao}. 

Taking the inner product of both sides of Equation~\eqref{eq:dual_groth} with $\KX_G$, we find that
\begin{equation}\label{eq:sum}
[\groth_{\lm}]\KX_G = \langle \dgroth_\lambda, \KX_G \rangle = \sum_{\pi \in S_N} \sgn(\pi) \sum_{\substack{(l_1,\dots,l_{l(\lm)}) \\ l_{\pi(i)} \leq \lm_{\pi(i)}-\pi(i)+i}}  \left(\prod_{\pi(i)} \mch{\pi(i)-1}{l_{\pi(i)}}\right)[m_{\lm(\pi,l_1,\dots,l_{l(\lm)})}]\KX_G,
\end{equation}
where $\lm(\pi,l_1,\dots,l_{l(\lm)})$ is the partition whose multiset of parts is $\{\lm_{\pi(i)}-\pi(i)+i-l_{\pi(i)}\}_{i=1}^{l(\lm)}$, and \\ $[m_{\lm(\pi,l_1,\dots,l_{l(\lm)})}]\KX_G$ denotes the coefficient of the corresponding monomial symmetric function in the expansion of $\KX_G$.

We now proceed to extend the proof of Gasharov \cite{gash} to the Kromatic symmetric function. Let $P$ be a poset. Taking an isomorphic copy of $P$ if necessary, we can assume that no positive integers are elements of $P$ and that $\emptyset$ is not an element of $P$. A \newword{Grothendieck $P$-array of type $\lm$} is a pair $(\pi, A)$, where $\pi \in S_N$ and $A$ is a map $A : \mathbb{N} \times \mathbb{N} \to P \sqcup \mathbb{N} \sqcup \{ \emptyset \}$ satisfying the following properties (we write $a_{ij}$ as shorthand for the element $A((i,j))$):
\begin{itemize}
\item $a_{ij}$ must equal $\emptyset$ unless $i \leq \ell(\lm)$ and $j \leq \lm_{\pi(i)}-\pi(i)+i$;
\item for each $p \in P$, there is some $(i,j) \in \mathbb{N} \times \mathbb{N}$ such that $a_{ij} = p$;
\item if $a_{ij} \in P$ for some $(i,j) \in \mathbb{N} \times \mathbb{N}$ with $j > 1$, then $a_{i(j-1)} \in P$ and $a_{i(j-1)} <_P a_{ij}$;
\item if $a_{ij} \in \mathbb{N}$, then $a_{ij} \leq \pi(i)-1$; and
\item if $a_{ij} \in \mathbb{N}$ for some $(i,j) \in \mathbb{N} \times \mathbb{N}$ with $j > 1$, then either $a_{i(j-1)} \in P$, or $a_{i(j-1)} \in \mathbb{N}$ and $a_{i(j-1)} \leq a_{ij}$.
\end{itemize}

We generally think of $A$ as a partial filling of an infinite matrix by elements of $P$, where coordinates $(i,j)$ with $a_{ij} = \emptyset$ are thought of as unfilled. Under this interpretation, the bullet points state that in addition to the restriction on which integers can appear in which row, the entries in each row of $A$ are left-justified and consist of an increasing chain in $P$ followed by a weakly increasing string of positive integers.

From the definitions, it is now straightforward to verify that the sum in Equation~\eqref{eq:sum} is equal to

\begin{equation}\label{eq:inv}
[\groth_{\lm}]\KX_G = \sum_{(\pi, A)} \sgn(\pi),
\end{equation}
where the sum ranges over all Grothendieck $P$-arrays of type $\lm$. The choice of $\pi$ determines the shape of the array, the choice of $l_i$ gives the number of cells in row $i$ that contain positive integers, the $m$-coefficient covers all choices of poset elements filling the appropriate shape (note that each stable set in $G$ corresponds uniquely to a chain in $P$), and the product of multiset coefficients covers all possible choices of weakly increasing sequences of positive integers for the rows in the remaining cells.

We claim that the sum in Equation~\eqref{eq:inv} evaluates to the number of Grothendieck $P$-tableaux of shape $\lm$. This will follow by exhibiting a sign-reversing involution $\Psi$ of the set of all pairs $(\pi, A)$ of Grothendieck $P$-arrays that are not Grothendieck $P$-tableaux.

First, note that if $\pi$ is a non-identity permutation, then for $i$ such that $\pi(i) > \pi(i+1)$ we have $\lm_{\pi(i)} - \pi(i) + i < \lm_{\pi(i+1)} - \pi(i+1) + i + 1$, so any Grothendieck $P$-array with $\pi$ not equal to the identity permutation is not a Grothendieck $P$-tableau, as it does not have partition shape.

The sign-reversing involution $\Psi$ that we need is a mild extension of that given by Gasharov in his original proof \cite[Proof of Theorem~3]{gash}. We call a position $(i,j)$ of a Grothendieck $P$-array with $i \geq 2$ a \newword{flaw}  
\begin{itemize}
    \item if $a_{ij} \in P$, and either $a_{(i-1)j} \notin P$ or $a_{ij} <_P a_{(i-1)j}$; or 
    \item if $a_{ij} \in \mathbb{Z}$, and either $a_{(i-1)j} = \emptyset$ or $a_{(i-1)j} \in \mathbb{Z}$ with $a_{ij} \leq a_{(i-1)j}$.
\end{itemize}
That is, $(i,j)$ is a flaw if it and the cell above it violate the conditions for the array to be a Grothendieck $P$-tableau. In particular, a Grothendieck $P$-array is a Grothendieck $P$-tableau if and only if it has no flaws.

The involution $\Psi$ is as follows. Given $(\pi, A)$ a non-tableau Grothendieck $P$-array, let $c$ be the leftmost column in which a flaw occurs. Let $r$ be the bottom-most row in which column $c$ has a flaw. We define $\Psi(\pi, A) = (\pi', A')$, where 
\begin{itemize}
    \item $\pi'$ is formed by applying the transposition $(r-1 \,\,\, r)$ to $\pi$; and
    \item $A'$ is formed by swapping each $a_{(r-1)j}$ with $a_{r(j+1)}$ for every $j \geq c$ (that is, swapping the elements of row $r-1$ that are weakly right of column $c$ with those elements of row $r$ that are strictly right of column $c$).
\end{itemize}
We write $a'_{ij}$ for the entry in positition $(i,j)$ of the array $A'$.

Clearly, the map $\Psi$ is sign-reversing, so it suffices to show that $\Psi$ takes non-tableau Grothendieck $P$-arrays to non-tableau Grothendieck $P$-arrays and that $\Psi$ is an involution. To establish these properties, it is enough to show that $\Psi$ is well-defined and preserves the flaw used (since it is straightforward to observe that no flaw is created to the left of or below the used flaw).

We split into cases based on whether or not $a_{rc} \in P$. If $a_{rc} \in P$ and $c > 1$, note that $a_{r(c-1)} \in P$ as well, so necessarily $a_{(r-1)(c-1)} \in P$, as the opposite would contradict our choice of $(r,c)$ as a leftmost flaw.

Given this observation, it is simple to verify that $\Psi$ is a flaw-preserving involution when $a_{rc} \in P$, by using the same argument as in \cite{gash} (the additional cases where some cells contain positive integers are straightforward). The crux of this part of Gasharov's argument is that, if in the newly formed array $A'$, we have that $a'_{(r-1)(c-1)}$ and $a'_{(r-1)c}$ are both in $P$, then they must satisfy $a'_{(r-1)(c-1)} <_P a'_{(r-1)c}$. This follows from the fact that $a_{(r-1)(c-1)} <_P a_{r(c+1)}$ in $A$ whenever both are in $P$, which in turn follows from $P$ being a ($\threeone$)-free poset (otherwise consider $a_{(r-1)(c-1)}$ and $a_{r(c-1)} <_P a_{rc} <_P a_{r(c+1)}$). Additionally, the integers in rows $r$ and $r+1$ of $A'$ satisfy the required upper bounds, since these bounds changed correspondingly as $\pi$ changed to $\pi'$. 

Suppose instead that $a_{rc} \in \mathbb{N}$. A key point is that if $a_{(r-1)(c-1)}$ exists, then we cannot have $a_{(r-1)(c-1)} = \emptyset$ or $a_{(r-1)(c-1)} \in \mathbb{N}$ with $a_{(r-1)(c-1)} \geq a_{rc}$, as otherwise it is straightforward to see that $(r,c-1)$ is a flaw strictly further left than $(r,c)$. Thus, if it exists, either $a_{(r-1)(c-1)} \in P$ or $a_{(r-1)(c-1)} \in \mathbb{N}$ with $a_{(r-1)(c-1)} < a_{rc}$.

Either way, we may verify that $\Psi$ produces rows consisting of a chain in $P$ followed by a sequence of weakly increasing integers, since before the swap, if $a_{(r-1)(c-1)} \in \mathbb{N}$, then we have $a_{(r-1)(c-1)} < a_{rc} \leq a_{r(c+1)}$. The only extra detail to check is that all integers that remain in their original rows $r-1$ and $r$ are less than or equal to $\pi'(r-1)-1$ and $\pi'(r)-1$, respectively. For row $r-1$, this is immediate, since $a_{(r-1)(c-1)} < a_{rc}$ in $A$. It is also clear for row $r$, provided that $a_{(r-1)c} \in \mathbb{N}$, since then $a_{rc} \leq a_{(r-1)c}$ in $A$. 

Thus, we need only consider row $r$ in $A'$ in the case that $a_{(r-1)c} = \emptyset$. In this case, the length of row $r$ of $A$ is strictly larger than the length of row $r-1$, so we have
\[
\lm_{r}-\pi(r)+r > \lm_{r-1}-\pi(r-1)+r-1,
\] 
which implies that
\[
\pi(r-1) > \pi(r)+(\lm_{r-1}-\lm_r)-1 \geq \pi(r)-1.
\]
Therefore, since $\pi(r-1) > \pi(r) - 1$, clearly $\pi'(r) = \pi(r-1) \geq \pi(r)$, so $\pi'(r)-1 \geq \pi(r)-1$, and thus the integers that remain in row $r$ after applying $\Psi$ satisfy the appropriate row bound. 

In conclusion, $\Psi$ is a sign-reversing involution on non-tableau Grothendieck $P$-arrays, so all such terms cancel in Equation~\eqref{eq:inv}, yielding that the coefficient $[\groth_{\lm}]\KX_G$ equals the number of Grothendieck $P$-tableaux of shape $\lm$, as desired.
\end{proof}

It is highly suggestive that Theorem~\ref{thm:grot} (and Gasharov's Schur-analogue) should have an interpretation and proof via the topology of Grassmannians.
We would be very interested in a solution to the following.

\begin{problem}\label{prob:Chow}
For each claw-free incomparability graph $G$, find a corresponding subvariety $V_G$ of the Grassmannian such that the cohomology class of $V_G$ is represented in $\Sym$ by $X_G$ and the structure sheaf class of $V_G$ is represented by $\KX_G$.
\end{problem}

\section{Conjectures}\label{sec:StanleyStembridge}
\subsection{Analogues of the Stanley--Stembridge conjecture}

Section~\ref{sec:Groth_pos} shows that Schur-positivity of $X_G$ when $G$ is a claw-free incomparability graph lifts to an analogue for $\KX_G$. It is natural to ask if it is similarly possible to lift the \newword{Stanley--Stembridge conjecture} --- claiming that such $X_G$ are $e$-positive --- to the context of the Kromatic symmetric function. However, it appears that the answer is ``no.''

We propose two definitions for a lift of the $e$-basis to the $K$-theoretic setting. On one hand, $e$-basis elements in usual symmetric function theory may be defined in terms of fillings of single-column Young diagrams, so we may lift this formula.
\begin{definition}\label{def:ebox}
The \newword{tableau $K$-elementary symmetric function} $\ebox_\lambda$ is given by
\[
\ebox_n = \groth_{1^n}
\quad \text{and} \quad
\ebox_\lambda = \ebox_{\lm_1} \dots \ebox_{\lm_{\ell(\lm)}}.
\]
\end{definition}

On the other hand, we may also define $e_n = \frac{1}{n!}X_{K_n}$, and lift this characterization.
\begin{definition}\label{def:egraph}
The \newword{graph $K$-elementary symmetric function} is given by 
\[
\estar_n = \frac{1}{n!}\KX_{K_n}
\quad \text{and} \quad
\estar_{\lm} = \estar_{\lm_1} \dots \estar_{\lm_{\ell(\lm)}}.
\]
\end{definition}

It is reasonable to hope (for extending the Stanley--Stembridge conjecture) that $\KX_G$ is positive in one of these $K$-theoretic $e$-bases, whenever $G$ is a claw-free incomparability graph, or even just when $G$ is a unit interval graph. However, one can compute that $\KX_{P_3}$ is not positive in either $K$-theoretic $e$-basis $\{\ebox_\lambda\}$ or $\{\estar_\lambda\}$, dashing any such hopes. (See Table~\ref{tab:mytab} for the $\TM$-basis expansion of $\KX_{P_3}$.)

The terms of $\KX_{P_3}$ that are homogeneous of degree $3$ must come from tableau or graph $K$-elementary symmetric functions of degree $3$, and have coefficients corresponding to $e$-expansion of $X_{P_3}$. Since $X_{P_3} = 3e_3+e_{21}$, one sees that the terms of $\KX_{P_3}$ for $|\lm|=3$ in the $\ebox$-basis are $3\ebox_3+\ebox_{21}$, and in the $\estar$-basis are $3\estar_3+\estar_{21}$.
However, we now encounter problems with the $|\lm| = 4$ terms. In particular, both $\ebox_{21}$ and $\estar_{21}$ are supported on the monomial $x_1^2x_2^2$, with two distinct variables each of degree $2$. However, it is easy to check that there is no proper set coloring of $P_3$ using exactly  $1$ twice and $2$ twice; thus, these monomials must be cancelled by $\ebox_\mu$ or $\estar_\mu$ terms with strictly negative coefficients.

That this breakdown is so fundamental suggests that it may not be possible to reasonably generalize $e$-positivity to the Kromatic symmetric function, in stark contrast with the generalization of Schur-positivity given in Theorem~\ref{thm:grot}. This suggests that the Stanley--Stembridge is not amenable to a topological interpretation along the lines of Problem~\ref{prob:Chow}.

\subsection{Distinguishing graphs by $\KX_G$}
It is widely believed that the chromatic symmetric function is a complete invariant for \newword{trees} (i.e., connected graphs without cycles). We propose the following weakening of this statement as a stepping stone that may be easier to prove.

\begin{conjecture}\label{conj:tree}
Let $T, T'$ be trees such that $\KX_T = \KX_{T'}$. Then $T \cong T'$.
\end{conjecture}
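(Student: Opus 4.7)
The plan is to leverage that $\KX_T$ is a strict refinement of $X_T$: its lowest-degree homogeneous component (of degree $n = |V(T)|$) is $X_T$ itself, so every tree invariant known to be encoded by the chromatic symmetric function---the number of vertices, the number of edges, the degree sequence, and various small subgraph counts---is also encoded by $\KX_T$. Beyond $X_T$, the higher-degree terms of $\KX_T$ carry genuinely new combinatorial information about how stable sets interact in covers, and the hope is that this additional information suffices to pin down $T$.

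First I would use Proposition~\ref{lem:mbasis} to extract from $\KX_T$ what one might call the \emph{stable set cover enumerator}: for each partition $\lambda$, the coefficient $[\TM_\lambda]\KX_T$ records the number of stable set covers of $T$ whose multiset of set sizes is $\lambda$. From these counts, by inclusion--exclusion over the possible partition types, one can in principle recover refined statistics such as the number of pairs of stable sets of specified sizes whose intersection has specified size, or the number of stable sets of each size containing a given vertex (up to $\mathrm{Aut}(T)$).

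The main step would then be to show that these refined multi-cover statistics, combined with $X_T$, determine the isomorphism type of $T$. A natural intermediate target is to extract from the enumerator the multiset of induced subgraph counts of $T$, since trees are reconstructible from their multiset of vertex-deleted subgraphs by Kelly's theorem. Alternatively, one might proceed by induction, attempting to show that $\KX_T$ determines $\KX_{T - v}$ for some canonically chosen leaf $v$, and then invoking the inductive hypothesis. A preliminary reasonableness check would be to verify the conjecture for structurally rigid families such as caterpillars and spiders, where the stable set combinatorics is sufficiently explicit to compute directly.

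The hard part is that the classical conjecture that $X_T$ distinguishes trees is itself open, so one cannot simply reduce to it; the additional $\TM$-data must genuinely be used to get past the barriers where the classical problem is stuck. Concretely, for each pair of non-isomorphic trees $T, T'$ with $X_T = X_{T'}$, one would need to exhibit a higher-degree coefficient of $\KX$ on which they differ. Identifying a single uniformly-defined invariant visible in $\KX_T$ but not in $X_T$ that suffices to separate all tree pairs is not at all obvious, and the argument may ultimately need to combine a general structural statement with computer verification for small $n$ and a case analysis of tractable tree families.
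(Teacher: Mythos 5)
This statement is an open conjecture in the paper, not a theorem: the authors explicitly offer no proof, only circumstantial evidence (three pairs of non-isomorphic graphs with $X_G = X_H$ that $\KX$ does distinguish, via the $\TM$-expansion of Proposition~\ref{lem:mbasis}). Your proposal likewise does not constitute a proof; it is a research plan whose central step --- showing that the stable-set-cover statistics encoded by the coefficients $[\TM_\lambda]\KX_T$, together with $X_T$, determine the isomorphism type of $T$ --- is precisely the content of the conjecture and is never carried out. You acknowledge this yourself in the final paragraph, so there is no completed argument to assess.

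Beyond the overall incompleteness, two specific sub-steps as stated would not go through. First, you claim one can recover ``the number of stable sets of each size containing a given vertex (up to $\mathrm{Aut}(T)$)'' from the cover enumerator; a symmetric function sees only aggregate, vertex-anonymous statistics, and even recovering the multiset over vertices $v$ of such local data from the partition-indexed coefficients $[\TM_\lambda]\KX_T$ is a nontrivial claim requiring justification (the paper itself only extracts the much coarser invariant $\min_v \max|I_v|$ in its examples). Second, the appeal to Kelly's reconstruction theorem would require extracting the deck of vertex-deleted subgraphs of $T$ from $\KX_T$, which is not known even for $X_T$ and is not argued here. The proposal is a sensible framing of how one might attack the conjecture --- and it correctly identifies that the extra $K$-theoretic data must do real work since the classical tree-distinguishing conjecture for $X_T$ is itself open --- but as a proof it has a gap coextensive with the statement being proved.
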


As evidence for Conjecture~\ref{conj:tree} being potentially easier than the corresponding statement for $X_T$, we observe that the Kromatic symmetric function distinguishes some graphs with equal chromatic symmetric function. Indeed, we are not currently aware of any graphs $G \not \cong G'$ with $\KX_G = \KX_{G'}$.

\begin{example}[cf.~{\cite[p.~170]{stanley}}]
    Let \[G = \begin{tikzpicture}[scale=1]
  \node[dot] at (0, 0)(1){};
    \node[dot] at (2, 0)(2){};
    \node[dot] at (1, 1)(3){};
    \node[dot] at (0, 1)(4){};
    \node[dot] at (2, 1)(5){};
    \draw[black, thick] (3) -- (1);
    \draw[black, thick] (3) -- (4);
    \draw[black, thick] (1) -- (4);
    \draw[black, thick] (3) -- (5);
    \draw[black, thick] (3) -- (2);
    \draw[black, thick] (5) -- (2);
  \end{tikzpicture} \text{ and let } H = \begin{tikzpicture}[scale=1]
  \node[dot] at (0, 0)(1){};
    \node[dot] at (1, 0)(2){};
    \node[dot] at (1, 1)(3){};
     \node[dot] at (0, 1)(4){};
     \node[dot] at (2, 0)(5){};
    \draw[black, thick] (4) -- (2);
    \draw[black, thick] (4) -- (3);
    \draw[black, thick] (2) -- (1);
    \draw[black, thick] (2) -- (3);
    \draw[black, thick] (1) -- (4);
    \draw[black, thick] (3) -- (5);
  \end{tikzpicture}.\] It is straightforward to compute that $X_G = X_H$. By Proposition~\ref{lem:mbasis}, $\KX_H$ has a nonzero coefficient of $\TM_{(2^3)}$, as one can easily find a covering of $H$ by three stable sets, each of size $2$. On the other hand, $G$ has a vertex that is connected to every other vertex, so any stable set containing this vertex must have size $1$. Hence, by Proposition~\ref{lem:mbasis}, the $\TM$-expansion of $\KX_G$ only involves $\TM_\lambda$ such that $\lambda$ contains a part of size $1$. In particular, $\KX_G \neq \KX_{H}$.
\end{example}

\begin{example}[cf.~{\cite[Fig.~9]{ore}}]
    Let \[G = \begin{tikzpicture}[scale=1]
  \node[dot] at (0, 0)(1){};
    \node[dot] at (2, 0)(2){};
    \node[dot] at (1, 1)(3){};
    \node[dot] at (0, 1)(4){};
    \node[dot] at (2, 1)(5){};
    \node[dot] at (1, 0)(6){};
    \draw[black, thick] (1) -- (4);
    \draw[black, thick] (1) -- (6);
    \draw[black, thick] (2) -- (6);
    \draw[black, thick] (3) -- (5);
    \draw[black, thick] (3) -- (6);
    \draw[black, thick] (4) -- (6);
  \end{tikzpicture} \text{ and let } H = \begin{tikzpicture}[scale=1]
  \node[dot] at (0, 0)(1){};
    \node[dot] at (2, 0)(2){};
    \node[dot] at (1, 1)(3){};
    \node[dot] at (0, 1)(4){};
    \node[dot] at (2, 1)(5){};
    \node[dot] at (1, 0)(6){};
    \draw[black, thick] (1) -- (3);
    \draw[black, thick] (1) -- (4);
    \draw[black, thick] (1) -- (6);
    \draw[black, thick] (2) -- (6);
    \draw[black, thick] (3) -- (5);
    \draw[black, thick] (3) -- (6);
  \end{tikzpicture}.\] Then $X_G = X_H$. Note that $G$ has a vertex $v$ adjacent to all but one other vertex. Hence, no stable set containing $v$ can have size greater than $2$. Therefore, by Proposition~\ref{lem:mbasis}, the $\TM$-expansion of $\KX_G$ only involves $\TM_\lambda$ such that $\lambda$ contains a part of size at most $2$. On the other hand, $H$ can be covered in a unique fashion by three stable sets of size $3$. Hence, by Proposition~\ref{lem:mbasis}, $\TM_{(3^3)}$ appears with coefficient $1$ in the $\TM$-expansion of $\KX_H$. Hence, $\KX_G \neq \KX_{H}$.
\end{example}

\begin{example}[cf.~{\cite[Fig.~5]{tutteCrew}}]
    Let \[G = \begin{tikzpicture}[scale=1]

  \node[dot] at (0, 1)(1){};
  \node[dot] at (1, 1)(2){};
  \node[dot] at (2, 1)(3){};
  \node[dot] at (3, 1)(4){};
  \node[dot] at (4, 1)(5){};
  \node[dot] at (1.33, 0)(6){};
  \node[dot] at (2.67, 0)(7){};
  \node[dot] at (3.5, 1.5)(8){};

  \draw[black, thick] (1) -- (2);
  \draw[black, thick] (2) -- (3);
  \draw[black, thick] (3) -- (4);
  \draw[black, thick] (4) -- (5);
  \draw[black, thick] (4) -- (8);
  \draw[black, thick] (5) -- (8);
  \draw[black, thick] (6) -- (2);
  \draw[black, thick] (6) -- (3);
  \draw[black, thick] (7) -- (1);
  \draw[black, thick] (7) -- (3);
  \draw[black, thick] (7) -- (4);
  \draw[black, thick] (7) -- (5);
  \draw[black, thick] (7) -- (6);

\end{tikzpicture} \text{ and let } H = \begin{tikzpicture}[scale=1]
  \node[dot] at (0, 1)(1){};
  \node[dot] at (1, 1)(2){};
  \node[dot] at (2, 1)(3){};
  \node[dot] at (3, 1)(4){};
  \node[dot] at (4, 1)(5){};
  \node[dot] at (1.33, 0)(6){};
  \node[dot] at (2.67, 0)(7){};
  \node[dot] at (3.5, 1.5)(8){};

  \draw[black, thick] (1) -- (2);
  \draw[black, thick] (2) -- (3);
  \draw[black, thick] (3) -- (4);
  \draw[black, thick] (4) -- (5);
  \draw[black, thick] (4) -- (8);
  \draw[black, thick] (5) -- (8);
  \draw[black, thick] (6) -- (1);
  \draw[black, thick] (6) -- (2);
  \draw[black, thick] (6) -- (3);
  \draw[black, thick] (6) -- (4);
  \draw[black, thick] (7) -- (3);
  \draw[black, thick] (7) -- (5);
  \draw[black, thick] (7) -- (6);
  \end{tikzpicture}.\] Then $X_G = X_H$, and in fact the stronger condition holds that these graphs have equal Tutte symmetric function \cite{tutteCrew}, or equivalently equal $U$-polynomial \cite{noble}. Note there is a vertex $v$ of $H$ with only two non-neighbours, and that these non-neighbours are adjacent. Hence, no stable set containing $v$ can have size greater than $2$. Therefore, by Proposition~\ref{lem:mbasis}, the $\TM$-expansion of $\KX_H$ only involves $\TM_\lambda$ such that $\lambda$ contains a part of size at most $2$. On the other hand, $G$ can be covered by stable sets of size $3$. Hence, by Proposition~\ref{lem:mbasis}, some $\TM_{\lm}$ where all parts of $\lm$ are at least $3$ appears with positive coefficient in the $\TM$-expansion of $\KX_G$. Hence, $\KX_G \neq \KX_{H}$.
\end{example}

In each of these examples, it is easy to distinguish the graphs' Kromatic symmetric functions because the graphs disagree on $\min_v \max |I_v|$, where the min ranges across all vertices $v$ and $I_v$ is a stable set containing $v$. It would be interesting to investigate more generally the extent to which the multiset of numbers $\{\max |I_v|: v \text{ a vertex}\}$ distinguishes Kromatic symmetric functions.

\section*{Acknowledgements}
\noindent
We are grateful to Per Alexandersson for helpful comments on an earlier draft of this manuscript and Boris Shapiro for alerting us to \cite{Shapiro.Smirnov.Vaintrob}.

We acknowledge the support of the Natural Sciences and Engineering Research Council of Canada (NSERC), [funding
reference numbers RGPIN-2020-03912, RGPIN-2021-00010 and, RGPIN-2022-03093]. \newline Cette recherche a \'et\'e financ\'ee par le Conseil de recherches en sciences naturelles et en g\'enie du Canada (CRSNG), [num\'eros de r\'ef\'erence RGPIN-2020-03912, RGPIN-2021-00010, et RGPIN-2022-03093]. 

This project was funded in part by the Government of Ontario.

\bibliographystyle{amsalpha}
\bibliography{bib}

\end{document}